\setlist[enumerate]{leftmargin=*}
\newtheorem{thm}{Theorem}[section]
\newtheorem{prop}{Proposition}[section]
\newtheorem{cor}{Corollary}[section]
\newtheorem{rmk}{Remark}[section]
\newtheorem{lma}{Lemma}[section]
\newcommand{\was}{d_W}
\newcommand{\mO}{\mathcal O}
\newcommand{\eq}{\begin{equation}}
\newcommand{\qe}{\end{equation}}
\newcommand{\sqn}{\sqrt n}
\newcommand{\tth}{\tilde \theta}
\newcommand{\tf}{\tilde f}
\def\N{{\rm I\kern-0.16em N}}
\def\R{{\rm I\kern-0.16em R}}
\def\E{{\rm I\kern-0.16em E}}
\def\P{{\rm I\kern-0.16em P}}
\def\F{{\rm I\kern-0.16em F}}
\def\B{{\rm I\kern-0.16em B}}
\def\C{{\rm I\kern-0.46em C}}
\def\G{{\rm I\kern-0.50em G}}
\newcommand{\Ecro}[2][]{\E_{#1}\left[#2\right]}
\newcommand{\Pcro}[2][]{\P_{#1}\left[#2\right]}
\newcommand{\lip}{\text{Lip}}
\numberwithin{equation}{section}
\font\eka=cmex10
\def\ind{\mathrel{\hbox{\rlap{%
\hbox to 7.5pt{\hrulefill}}\raise6.6pt\hbox{\eka\char'167}}}}
\begin{document}

\title{\textbf{On the rate of convergence in de Finetti's representation theorem}}

\date{}
\renewcommand{\thefootnote}{\fnsymbol{footnote}}
\begin{small}
\author{Guillaume Mijoule\footnotemark[1],\, Giovanni Peccati\footnotemark[2] \, and   Yvik Swan\footnotemark[1]}
\end{small}
\footnotetext[1]{Université de Liège.}
\footnotetext[2]{Université du Luxembourg. }

\maketitle

\abstract {}{A consequence of de Finetti's representation theorem is that for every infinite sequence of exchangeable 0-1 random variables $(X_k)_{k\geq1}$, there exists a probability measure $\mu$ on the Borel sets of $[0,1]$ such that $\bar X_n = n^{-1} \sum_{i=1}^n X_i$ converges weakly to $\mu$. For a wide class of probability measures $\mu$ having smooth density on $(0,1)$, we give bounds of order $1/n$ with explicit constants for the Wasserstein distance between the law of $\bar X_n$ and $\mu$. This extends a recent result {by} Goldstein and Reinert \cite{goldstein2013stein} regarding the distance between the scaled number of white balls drawn in a P\'olya-Eggenberger urn and its limiting distribution.  We prove also that, in the most general cases, the distance between the law of $\bar X_n$ and $\mu$ is bounded below by $1/n$ and above by $1/\sqn$ (up to some multiplicative constants). For every $\delta \in [1/2,1]$, we give an example of an exchangeable sequence such that this distance is of order $1/n^\delta$.


\medskip

\noindent
{\it Keywords:} de Finetti's theorem, Exchangeable Variables, Wasserstein distance, Urn models.

\smallskip

\noindent
{\it 2010 AMS subject classification:}  60F05, 60G09.

\tableofcontents

\section{Introduction}

\subsection{Overview and framework}

An infinite sequence $(X_k)_{k\geq 1}$ of random variables is exchangeable if for every $n \geq 1$ and every permutation $\sigma$ of $\{1,\ldots,n\}$, $(X_{\sigma(1)},\ldots,X_{\sigma(n)})$ has the same distribution as $(X_1,\ldots,X_n)$. The following {fundamental} theorem was discovered by Bruno de Finetti \cite{de1980foresight} :

\begin{thm}(de Finetti, 1937)
\label{thm:definetti}
An infinite sequence $(X_k)_{k\geq 1}$   of 0-1 random variables is exchangeable if only if there exists a (necessarily unique) probability measure $\mu$ on the Borel sets of $[0,1]$ such that for every $n\geq 1$ and every $(e_1,\ldots,e_n) \in \{0,1\}^n$,
\begin{equation}
\label{eq:definetti}
\P\left[ X_1 = e_1, \ldots,X_n=e_n\right] = \int_0^{1} t^{k} (1-t) ^{n-k}\, \mu(dt),
\end{equation}
where $k = \sum_{i=1}^n e_i$.
\end{thm}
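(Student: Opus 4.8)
The plan is to settle the ``if'' direction by inspection and to extract the ``only if'' direction from the Hausdorff moment problem. For the easy direction I would simply note that, if \eqref{eq:definetti} holds, then $\P[X_1 = e_1, \ldots, X_n = e_n]$ depends on $(e_1,\ldots,e_n)$ only through $k = \sum_{i} e_i$, a permutation-invariant quantity; hence every finite-dimensional law of $(X_k)_{k\ge 1}$ is permutation-invariant and the sequence is exchangeable. Uniqueness of $\mu$ will come essentially for free at the end, since a finite measure on $[0,1]$ is determined by its sequence of moments (polynomials being dense in $C([0,1])$ by Weierstrass).

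For the converse I would set $c_m := \P[X_1 = \cdots = X_m = 1]$ for $m \ge 1$, together with $c_0 := 1$. Splitting $\{X_1 = \cdots = X_m = 1\}$ into the disjoint union of $\{X_1 = \cdots = X_{m+1} = 1\}$ and $\{X_1 = \cdots = X_m = 1,\, X_{m+1} = 0\}$, iterating, and using exchangeability (equivalently, applying inclusion--exclusion to the events $\{X_i = 0\}$, $i = k+1,\ldots,n$), one gets for every $0 \le k \le n$
\begin{equation}
\P[X_1 = \cdots = X_k = 1,\ X_{k+1} = \cdots = X_n = 0] \;=\; \sum_{j=0}^{n-k} \binom{n-k}{j} (-1)^j\, c_{k+j}.
\end{equation}
The left-hand side is a probability, hence nonnegative; letting $k$ and $n$ range over all admissible values, this is exactly the statement that $(c_m)_{m\ge 0}$ is completely monotone, that is $(-1)^r (\Delta^r c)_m \ge 0$ for all integers $r,m \ge 0$, where $(\Delta c)_m = c_{m+1} - c_m$. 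Since moreover $c_0 = 1$, Hausdorff's theorem on the moment problem furnishes a unique probability measure $\mu$ on the Borel sets of $[0,1]$ with $c_m = \int_0^1 t^m\, \mu(dt)$ for every $m \ge 0$.

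Finally I would propagate this representation to every finite-dimensional probability. Fixing $(e_1,\ldots,e_n) \in \{0,1\}^n$ and $k = \sum_i e_i$, exchangeability gives $\P[X_1 = e_1,\ldots,X_n = e_n] = \P[X_1 = \cdots = X_k = 1,\ X_{k+1} = \cdots = X_n = 0]$, which by the displayed identity and the moment formula for $\mu$ equals $\sum_{j=0}^{n-k}\binom{n-k}{j}(-1)^j \int_0^1 t^{k+j}\,\mu(dt) = \int_0^1 t^k (1-t)^{n-k}\,\mu(dt)$, i.e.\ \eqref{eq:definetti}. The single genuinely nontrivial step is the passage from ``all finite differences of $(c_m)$ are nonnegative'' to ``$(c_m)$ is the moment sequence of a measure on $[0,1]$'' --- that is, Hausdorff's theorem; the rest is bookkeeping with exchangeability and inclusion--exclusion. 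A moment-problem-free alternative would be the reverse-martingale proof: verify that $\E[X_1 \mid \sigma(\bar X_n, \bar X_{n+1},\ldots)] = \bar X_n$, apply the reverse martingale convergence theorem to obtain $\bar X_n \to Y$ almost surely and in $L^1$, and take $\mu = \mathrm{law}(Y)$ after checking, via the Hewitt--Savage $0$--$1$ law, that the $X_i$ are conditionally i.i.d.\ $\mathrm{Bernoulli}(Y)$ --- where the delicate point is precisely this conditional independence.
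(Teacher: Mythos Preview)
Your argument is correct and is one of the standard proofs of de Finetti's theorem via the Hausdorff moment problem; the inclusion--exclusion computation is right, complete monotonicity follows, and the invocation of Hausdorff's theorem is legitimate (as is the alternative reverse-martingale sketch you mention).

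However, there is nothing to compare against: the paper does not give a proof of Theorem~\ref{thm:definetti}. It is stated as de Finetti's classical 1937 result, with a reference to the original paper, and is used as background for the main object of study (rates of convergence in \eqref{eq:2}). So your proposal supplies a proof where the paper intentionally provides none; it is a sound self-contained argument, but it is not reproducing or diverging from anything in the paper itself.
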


Exchangeability has been extensively studied in the literature. Hewitt and Savage \cite{hewitt1955symmetric} extend de Finetti's result for variables taking values in general spaces. Diaconis and Freedman \cite{diaconis1980finite} give an approximation result when the sequence $(X_k)_{k\geq 1}$ is finite, in which case, a representation of the type \eqref{eq:definetti} does not necessarily hold.  
For an overview of results related to exchangeability, we refer to the classical {lecture notes} \cite{aldous1985exchangeability}, {as well as to \cite{austin, kallenberg2006probabilistic, pitman} for more recent accounts.}

Equation $\eqref{eq:definetti}$ has an elegant Bayesian interpretation, namely: the law of $(X_k)_{k\geq1}$ is that of a sequence of i.i.d.\ Bernoulli random variables with parameter $\theta$ randomly chosen from the (prior) probability measure $\mu$.  The measure $\mu$ is sometimes called {\it de Finetti's measure} or {\it mixing measure} {associated with the sequence}.

Defining $\bar X_n = n^{-1} \sum_{i=1}^n X_i$, we readily obtain {the following} De Finetti-type Law of Large Numbers (LLN) {in distribution}:
\begin{equation}
  \label{eq:2}
  \bar X_n \stackrel{\mathcal{L}}{\longrightarrow} \mu,
\end{equation}
where $\mathcal{L}$ indicates weak convergence. Relation \eqref{eq:2} is easy to see with the Bayesian point of view of \eqref{eq:definetti}~: if, on some probability space, we are given a random variable $\theta$ with distribution $\mu$ and a sequence $(X_k)_{k\geq 1}$ which are, conditionally on $\theta$, Bernoulli i.i.d. random variables with parameter $\theta$, then $\bar X_n$ converges almost surely to $\theta$. Hence, convergence in distribution also holds.

Conditions under which LLNs for exchangeable sequences hold have, naturally, been extensively studied in the literature see \cite{MR0185643,MR2204712,stoica2011complete,MR901022} or the more general \cite{MR860208}.  There has however been only little investigation into explicit rates of convergence {for the distributional limit theorem} in \eqref{eq:2} for general mixture measures $\mu$. One of the results of \cite{hauray2014kac} is a bound of the order $1/\sqn$ whenever the $X_k$ take values in a subspace of $\R^d$; however, in our much simpler framework where $X_k$ is 0 or 1, such a bound is not hard to obtain directly (see Proposition \ref{prop:boundbelow}). We also mention that, although the main result of \cite{diaconis1980finite} is sometimes refered to as {\it quantitative de Finetti theorem} (or {\it quantum de Finetti theorem}), it is a completely different problem we investigate here : in \cite{diaconis1980finite}, the exchangeable sequence is supposed to be finite, and a bound is obtained for the distance between the {distribution} of $(X_1,\ldots,X_n)$ and the set of mixture measures of the type \eqref{eq:definetti}. Of course, if the sequence is infinite (which is what we assume), de Finetti's theorem exactly says that this distance is zero.

To the best of our knowledge, the closest result to ours is due to \cite{goldstein2013stein} and concerns the  classical {\it P\'olya-Eggenberger urn model} {(see e.g. \cite{pitman} for a general discussion, as well as \cite{edp,edpp,p} for several recent developments)}. This model is constructed as follows : at time 0, an urn contains $A\geq 1$ white balls and $B \geq 1$ black balls and at every positive integer time, a ball is randomly drawn from the urn (independently of the past) and replaced along with $m\geq 1$ additional balls of the same color. Then, defining $X_n=1$ if a white ball is drawn at time $n$ and $X_n=0$ otherwise, it is well known that \eqref{eq:definetti} holds with $\mu$ being the Beta distribution with parameters $A/m$ and $B/m$. Here, the Beta distribution with parameters $\alpha>0$ and $\beta>0$ is the probability measure with density
\eq
\label{eq:densitebeta}
p_{\alpha,\beta} (x) = \frac{1}{B(\alpha,\beta)}x^{\alpha-1}(1-x)^{\beta-1} \mathbf 1_{(0,1)}(x), \qe where $B(\alpha,\beta) = \int_0^1 t^{\alpha-1}(1-t)^{\beta-1} dt$ is the Beta function and $\mathbf 1_E$ the indicator function of the set $E$. One of the results of \cite{goldstein2013stein} is that the Wasserstein distance between the scaled number of drawn white balls $\bar X_n$ and its corresponding limiting Beta distribution is of order $1/n$. The proof is based on a version of Stein's method as adapted to the Beta distribution : a Stein operator is found for the discrete variable $\bar X_n$ and compared to the Stein operator of the Beta distribution. We also mention a similar bound obtained by Döbler \cite{dobler2014stein}.


{We recall that the Wasserstein distance between the distributions of two real-valued and integrable random variables $X,Y$ is given by the quantity 
\eq
\label{eq:was3}
\was(X,Y) = \sup_{\phi \in \lip(1)}\left | \E[ \phi(X)] - \E[\phi(Y)]\right |,
\qe
where $\lip(K)$ is the set of $K$-Lipschitz functions on the real-line. It is a well-known (and easily checked) fact that the topology induced by $d_W$ on the class of probability measures on the real line is strictly stronger than the topology of convergence in distribution. In the framework of the present paper, it is also interesting to notice that, if one restricts oneself to the collection of all probability measures supported on $[0,1]$ then the two topologies are actually equivalent (to see this one can e.g. use the representation \eqref{eq:was4} below, and then exploit the Dominated Convergence theorem). } 

It is the goal of this paper to estimate the rate of convergence in  {Wasserstein distance} {for the distributional limit theorem} in \eqref{eq:2}.
\subsection{Main results}

In this paper, we prove that a bound of the same order as in \cite{goldstein2013stein} for the Beta target still holds for more general distributions $\mu$. Our main theorem is the following : \begin{thm} \label{thm:main} Let $(X_k)_{k\geq 1}$ be an infinite sequence of 0-1 exchangeable variables, $\bar X_n = n^{-1}\sum_{i=1}^n X_i$ and $\mu$ the limiting distribution of $\bar X_n$. Suppose $\mu$ has a smooth density $p$ on $(0,1)$ satisfying \begin{equation}
  \label{eq:3}
  \int_0^1 u(1-u) |p'(u)| du < \infty
\end{equation}
 Then with $\theta \sim \mu$,
\eq
\label{eq:ineqmain}
\frac{C_1}{n}\leq \was(\bar X_n, \theta) \leq \frac{C_2}{n},
\qe
where $C_1$ and $C_2$ only depend on $p$ and are given by
\begin{align}
 \label{eq:constantsmain}
   C_1(\mu)& = \int_0^1 u(1-u) p(u) du,\\
C_2(\mu) &= \int_0^1 \left( |1-2u| + u^2 + (1-u)^2 \right) p(u) du + \int_0^1 u(1-u) |p'(u)| du   + \frac{3}{\sqrt{2\pi e}}. \nonumber
\end{align}
\end{thm}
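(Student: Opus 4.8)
I would treat the two bounds separately. For the \emph{lower bound}, the plan is to test against the single $1$-Lipschitz function $\phi_0(x)=x(1-x)$: its derivative $1-2x$ has modulus $\le 1$ on $[0,1]$, and extending $\phi_0$ affinely outside $[0,1]$ gives a $1$-Lipschitz function on $\R$ with unchanged expectations against $\bar X_n$ and $\theta$ (both are supported in $[0,1]$). By the Bayesian form of de Finetti's theorem, $n\bar X_n\mid\theta\sim\mathrm{Bin}(n,\theta)$, so $\E[\bar X_n\mid\theta]=\theta$ and $\E[\bar X_n^2\mid\theta]=\theta^2+\theta(1-\theta)/n$, whence $\E[\phi_0(\bar X_n)]-\E[\phi_0(\theta)]=-(\E[\bar X_n^2]-\E[\theta^2])=-\tfrac1n\E[\theta(1-\theta)]$, and \eqref{eq:was3} gives $\was(\bar X_n,\theta)\ge \tfrac1n\E[\theta(1-\theta)]=C_1(\mu)/n$. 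This argument uses no regularity of $\mu$ and is exactly Proposition~\ref{prop:boundbelow}.

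For the \emph{upper bound} I would run Stein's method with the density Stein operator of $\mu$,
\[
 \mathcal A f(x)=x(1-x)f'(x)+\Big((x(1-x))'+x(1-x)\tfrac{p'(x)}{p(x)}\Big)f(x)=\frac{1}{p(x)}\,\frac{d}{dx}\big(x(1-x)p(x)f(x)\big),
\]
which satisfies $\E[\mathcal A f(\theta)]=0$ as soon as $x(1-x)p(x)f(x)\to0$ at $0$ and $1$. For a $1$-Lipschitz $\phi$, the Stein equation $\mathcal A f=\phi-\E\phi(\theta)$ is solved explicitly by $f_\phi(x)=\big(x(1-x)p(x)\big)^{-1}\int_0^x(\phi(t)-\E\phi(\theta))p(t)\,dt$, so that $\was(\bar X_n,\theta)=\sup_\phi|\E[\mathcal A f_\phi(\bar X_n)]|$. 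The first block of work is to bound $f_\phi$ and its derivatives uniformly over $1$-Lipschitz $\phi$: from $|\phi(t)-\E\phi(\theta)|\le\E|t-\theta|\le1$ one gets $|x(1-x)p(x)f_\phi(x)|\le\min(F(x),1-F(x))$ (with $F$ the distribution function of $\mu$), and the rewriting $x(1-x)f_\phi'(x)=(\phi(x)-\E\phi(\theta))-\big((x(1-x))'+x(1-x)p'/p\big)f_\phi(x)$ then controls $\|f_\phi\|_\infty$, $\int_0^1|f_\phi'|$ and $\int_0^1 x(1-x)|f_\phi''(x)|p(x)\,dx$ in terms of $\int_0^1 u(1-u)|p'(u)|\,du$ and of elementary moments of $\mu$. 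Hypothesis \eqref{eq:3} enters precisely here.

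The second block is to estimate $\E[\mathcal A f_\phi(\bar X_n)]$ using that $n\bar X_n\mid\theta\sim\mathrm{Bin}(n,\theta)$. The classical binomial Stein identity $\theta\,\E[(n-B)g(B+1)\mid\theta]=(1-\theta)\E[Bg(B)\mid\theta]$, integrated over $\theta$, rearranges into the exact relation $\E[(\theta-\bar X_n)g(\bar X_n)]=-\E[\theta(1-\bar X_n)(g(\bar X_n+\tfrac1n)-g(\bar X_n))]$ for every bounded $g$. Independently, writing $\E[(\theta-\bar X_n)g(\bar X_n)]=\sum_k g(k/n)\binom nk\int_0^1(t-\tfrac kn)t^k(1-t)^{n-k}p(t)\,dt$ and using the algebraic identity $n(t-\tfrac kn)t^k(1-t)^{n-k}=-\tfrac{d}{dt}(t^{k+1}(1-t)^{n-k+1})+(1-2t)t^k(1-t)^{n-k}$ plus one integration by parts in $t$ (all boundary terms vanishing under \eqref{eq:3}) yields the complementary exact relation
\[
 \E[(\theta-\bar X_n)g(\bar X_n)]=\frac1n\int_0^1\big(t(1-t)p(t)\big)'\,g_n(t)\,dt=-\frac1n\int_0^1 t(1-t)p(t)\,g_n'(t)\,dt,\qquad g_n(t):=\E[g(\bar X_n)\mid\theta=t].
\]
Applying the first relation with $g=f_\phi$ (Taylor-expanding the finite difference to first order and reabsorbing the remainders by a further application of the same relation) identifies $\E[x(1-x)f_\phi'(\bar X_n)]$ with $n\E[(\bar X_n-\theta)f_\phi(\bar X_n)]$ up to $O(1/n)$; feeding this into the second relation, together with the weighted Bernstein-type estimate $\E[h(\bar X_n)\mid\theta=t]=h(t)+O(1/n)$, expresses $\E[\mathcal A f_\phi(\bar X_n)]$ as a sum of explicit functionals of $f_\phi,f_\phi',f_\phi''$ that, after the supremum over $\phi$ and the bounds of the previous paragraph, is $\le 1/n$ times the three pieces of $C_2(\mu)$; the constant $\tfrac{3}{\sqrt{2\pi e}}$ comes from a uniform estimate on binomial point probabilities (equivalently on mean absolute deviations of $\bar X_n$ given $\theta$) invoked where the weights $x(1-x)$ degenerate.

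The hard part will be the endpoint analysis of $f_\phi$: since $p$ may blow up at $0$ and $1$ (as for $\mathrm{Beta}(\alpha,\beta)$ with small $\alpha,\beta$), I must show that $x(1-x)p(x)f_\phi(x)$ and the derived quantities stay controlled there with the \emph{explicit} constants of \eqref{eq:constantsmain}, which is exactly what \eqref{eq:3} secures, while simultaneously organising the comparison so that the error is genuinely of order $1/n$ and not $1/\sqrt n$ — the naive coupling only gives $\was(\bar X_n,\theta)\le\E|\bar X_n-\theta|=O(1/\sqrt n)$ — which is why the integration-by-parts identity above, producing at once the factor $1/n$ and the regularising weight $t(1-t)$, is indispensable.
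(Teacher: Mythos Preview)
Your lower bound is exactly the paper's Proposition~\ref{prop:boundbelow}: test against the single $1$-Lipschitz function $x\mapsto x(1-x)$ and use the conditional binomial structure. Nothing to add there.

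Your upper-bound route, however, is \emph{not} the one the paper takes. The paper does not run Stein's method for the target $\mu$. Instead it proceeds in two decoupled steps: (i) a Berry--Ess\'een bound in Wasserstein distance (Theorem~\ref{thm:chen}) yields, after conditioning on $\theta$, the comparison
\[
\Big|\,\was(\bar X_n,\theta)-\was\Big(\theta+\sqrt{\tfrac{\theta(1-\theta)}{n}}\,Z,\ \theta\Big)\,\Big|\le \frac{\E[\theta^2+(1-\theta)^2]}{n}
\]
(Proposition~\ref{prop:equivalence}), and (ii) a purely analytic estimate of $\was\big(\theta+\sqrt{\theta(1-\theta)/n}\,Z,\theta\big)$ via the $L^1$-of-CDF representation \eqref{eq:was4} (Proposition~\ref{prop:main}). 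The three pieces of $C_2(\mu)$ arise from exactly this decomposition: $\int(u^2+(1-u)^2)p$ is the Berry--Ess\'een cost from step~(i); $\int|1-2u|\,p$ and $\int u(1-u)|p'|$ come from differentiating $G(t/f(u))p(u)$ in $u$ in step~(ii); and $3/\sqrt{2\pi e}$ comes from the Gaussian tail estimate $\omega(y)\le (y\sqrt{2\pi e})^{-1}$ applied to the residual pieces near the endpoints.

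Your Stein-method plan is a reasonable alternative in spirit --- it is closer to what Goldstein--Reinert do for the Beta target --- and the two exact identities you write (the binomial Stein relation and the integration-by-parts formula $\E[(\theta-\bar X_n)g(\bar X_n)]=-\tfrac1n\int t(1-t)p(t)\,g_n'(t)\,dt$) are correct and do produce the factor $1/n$ with the regularising weight $t(1-t)$. What is not credible is that this route would deliver the \emph{stated} constant $C_2(\mu)$. Bounding $f_\phi$, $f_\phi'$, $f_\phi''$ uniformly over $1$-Lipschitz $\phi$ for a general density satisfying only \eqref{eq:3} is genuinely delicate (you flag this yourself), and the constants one extracts from Stein-solution bounds have no reason to collapse to $\int|1-2u|p+\int u(1-u)|p'|+3/\sqrt{2\pi e}$; in particular your attribution of $3/\sqrt{2\pi e}$ to a binomial point-mass estimate is a guess --- in the paper that number comes from the Gaussian, not the binomial. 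So as a proof of an $O(1/n)$ upper bound your outline is viable; as a proof of Theorem~\ref{thm:main} with its explicit $C_2(\mu)$, it is not, and you would need to switch to the paper's Berry--Ess\'een~$+$~CDF argument to recover those constants.
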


{
\begin{rmk} In view of the previous discussion and of the explicit expression of the constants $C_1, C_2$, it is in principle possible to obtain estimates similar to \eqref{eq:ineqmain} to more general situations, like for instance to the case where the measure $\mu$ can be represented as the weak limit of measures of the form $\mu_n = p_n dx$, where each $p_n$ is a smooth density such that the numerical sequence $ n\mapsto \int_0^1 |p_n(x)|+|p'_n(x)| dx$ is bounded. We leave such an extension to the interested reader.
\end{rmk}
}

Theorem \ref{thm:main} will be applied to the case of the Beta distribution, leading to a bound in Wasserstein distance in the P\'olya-Eggenberg urn model as explained in Section \ref{thm:definetti} (see Corollary \ref{cor:polya}). We will numerically compare the constants we obtain with the constants in \cite{goldstein2013stein}; it turns out that our result leads to better constants for the wide range of values of $A,B$ and $m$ we investigated.

{In view of the above mentioned LLN}, it is not hard to prove (see Proposition \ref{prop:boundbelow}) that, whatever the distribution $\mu$ of $\theta$, $$\frac{\Ecro{\theta(1-\theta)}}{n} \le \was(\bar X_n,\theta) \le \sqrt\frac{\Ecro{\theta(1-\theta)}}{n}.$$
Another contribution of this paper is that we prove that such bounds are sharp in the sense that, for every $\delta \in [1/2,1]$, we exhibit  a measure $\mu$ which violates Assumption   \eqref{eq:3} and such that $\was(\bar X_n,\theta)$ is of order $1/n^\delta$; see Proposition \ref{prop:examples}.

Let us now briefly sketch our strategy. We know from the classical central limit theorem that, conditionally on $\theta$, $\sqn(\bar X_n - \theta)$ converges weakly to a normal distribution with variance $\theta(1-\theta)$ (the variance of a Bernoulli variable of parameter $\theta$). Moreover, a Berry-Ess\'een type theorem gives a bound of the Wasserstein distance between those two variables. This will allow us to prove (see Proposition \ref{prop:equivalence}) that controlling $\was(\bar X_n,\theta)$ is equivalent (in a sense which will be made precise later on) to controlling $\was\left(\theta+\sqrt\frac{{\theta(1-\theta)}}{n} Z,\theta\right)$ where $Z$ stands for a standard normal random variable independent of $\theta$. Finally, we bound the latter quantity by a purely analytical method, using the representation of the Wasserstein distance as the $L^1$ norm of the difference of the cumulative distribution functions.

The paper is organized as follows. Section \ref{sec:prelim} gives the basic definitions and notations. In Section \ref{sec:boundsgeneral}, we study the trivial cases and we show that the distance between $\was(\bar X_n,\theta)$ and $\was\left(\theta+\sqrt\frac{{\theta(1-\theta)}}{n} Z,\theta\right)$ is bounded by $\mO(1/n)$. In Section \ref{sec:smooth} we bound  $\was\left(\theta+\sqrt\frac{{\theta(1-\theta)}}{n} Z,\theta\right)$ from above  for regular enough measures $\mu$ and prove Theorem \ref{thm:main}. Finally, in Section \ref{sec:examples}, for every $\delta \in [1/2,1]$, we give an example of a measure $\mu$ for which the rate of convergence is exactly of order $1/n^{\delta}$.

\section{Definitions and notations}
\label{sec:prelim}

Let $\mu$ and $\nu$ be two probability measures on $\R$. The Wasserstein (or Kantorovitch) distance between $\mu$ and $\nu$ is defined by
\eq
\label{eq:was}
\was(\mu,\nu) = \text{inf}_{\pi} \; \int\int|x-y| \pi(dx,dy),
\qe
where the infimum is taken over all probability measures $\pi$ on $\R \times \R$ with first marginal $\mu$ and second marginal $\nu$. When $X$ and $Y$ are integrable real-valued random variables, $\was(X,Y)$ will denote the Wasserstein distance between the probability measures induced by $X$ and $Y$ on the Borel sets of $\R$. In this case equation \eqref{eq:was} becomes
\eq
\label{eq:was2}
\was(X,Y) = \inf \E[|X'-Y'|],
\qe
the infimum being taken over all couples of real-valued random variables $(X',Y')$ such that $X'$ (resp. $Y'$) has the law of $X$ (resp. $Y$). From the Kantorovitch duality theorem, {we readily deduce the representation \eqref{eq:3} mentioned in the Introduction}. Yet another representation of the Wasserstein distance is given by the $L^1$-norm of the difference between the cumulative distribution functions :
\eq
\label{eq:was4}
\was(X,Y) = \int_\R \big| \P[X \leq x] - \P[Y\leq x]\big| \;dx.
\qe
{For a proof of these equivalent definitions, one can consult e.g. \cite{gibbs2002choosing}.}

\medskip
{
At this point, it is worth mentioning the following two standard facts concerning the relation between the Wasserstein distance $d_W(X,Y)$ and the so-called {\it Kolmogorov distance}
$$
d_K(X,Y) := \sup_{x\in \R} \left| \P[X\leq x] - \P[Y \leq x] \right| dx.
$$
\begin{enumerate}
\item[(a)] If $Y$ has a density bounded by some constant $A\in (0,\infty)$ and $X$ is integrable, then one has that 
$$
d_K(X,Y) \leq C \sqrt{d_W(X,Y)},
$$
where $C$ is a constant possibly depending on $A$ (one can obtain such an estimate e.g. by mimicking the proof of \cite[Theorem 3.3]{chen2010normal}).
\item[(b)] If $X$ and $Y$ take values in $[0,1]$, then
$$
d_W(X,Y) =  \int_{[0,1]}  \big| \P[X \leq x] - \P[Y\leq x]\big| \;dx \leq d_K(X,Y).
$$
\end{enumerate}
In particular, the estimates appearing in (a) and (b) may be combined with Theorem \ref{thm:main}, in order to deduce (arguably not optimal) upper and lower bounds on the rate of convergence in the Kolmogorov distance for the limit theorem in \eqref{eq:2}.
}

Since the quantities of interest here only involve the distribution of the considered random variables, we make the following assumption in the rest of the paper : on some probability space $(\Omega,\mathcal F, \P)$, we are given a random variable $\theta$ with values in $[0,1]$ and with law denoted $\mu$, and a sequence $(X_k)_{k \geq 1}$ of 0-1 random variables such that $(X_k)_{k\geq 1}$ are, conditionally on $\theta$, i.i.d. Bernoulli random variables with parameter $\theta$. The distribution of $(X_k)_{k \geq 1}$ is then given by \eqref{eq:definetti}.

For a sequence of random variables $(V_k)_{k\geq 1}$, we write $$\bar V_n = n^{-1}\sum_{k=1}^n V_k.$$
We also adopt the following notation : for two real-valued non-negative sequences $(a_n)_{n\geq 1}$ and $(b_n)_{n\geq 1}$, we write $a_n \cong b_n$ if both $a_n = \mO (b_n)$ and $b_n = \mO(a_n)$.

\section{Bounds in the general case}
\label{sec:boundsgeneral}
\subsection{Preliminaries}

We start with the following simple proposition, which shows that $\was(\bar X_n, \theta)$ is bounded from above and below, respectively, by terms of the order $1/\sqn$ and $1/n$.
\begin{prop}
\label{prop:boundbelow}
It holds that
\eq
\frac{\E[\theta(1-\theta)]}{n}\leq \was\left(\bar{X_n}, \theta\right) \leq  \sqrt\frac{\E[\theta(1-\theta)]}{n}.
\qe
\end{prop}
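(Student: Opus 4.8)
The plan is to exploit the conditional structure: conditionally on $\theta$, the random variables $X_k$ are i.i.d.\ Bernoulli$(\theta)$, so $\bar X_n$ has conditional mean $\theta$ and conditional variance $\theta(1-\theta)/n$. For the \emph{upper bound}, I would use the coupling definition \eqref{eq:was2}: since $\bar X_n$ and $\theta$ are defined on the same space, $\was(\bar X_n,\theta)\le \E|\bar X_n - \theta|$, and then by the conditional Jensen (or Cauchy--Schwarz) inequality $\E|\bar X_n-\theta| = \E\big[\E[|\bar X_n-\theta|\mid\theta]\big] \le \E\big[\sqrt{\E[(\bar X_n-\theta)^2\mid\theta]}\big] = \E\big[\sqrt{\theta(1-\theta)/n}\big] \le \sqrt{\E[\theta(1-\theta)]/n}$, the last step being Jensen again (concavity of the square root). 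That gives the right-hand inequality immediately.

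For the \emph{lower bound}, I would use the dual (Lipschitz) representation \eqref{eq:was3}. The natural test functions are low-degree polynomials. Take $\phi(x) = \pm x$ (which is $1$-Lipschitz): then $\E[\phi(\bar X_n)] = \E[\bar X_n] = \E[\theta] = \E[\phi(\theta)]$, so degree one gives nothing — one needs to go to second moments, but $x\mapsto x^2$ is only Lipschitz after restricting to $[0,1]$ (with constant $2$). Since both $\bar X_n$ and $\theta$ take values in $[0,1]$, I can work with $\phi(x) = x^2/2$, which is $1$-Lipschitz on $[0,1]$; extend it to a globally $1$-Lipschitz function off $[0,1]$ without changing any expectation. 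Then $\was(\bar X_n,\theta) \ge \big|\E[\bar X_n^2] - \E[\theta^2]\big|/2$. Now compute $\E[\bar X_n^2] = \E\big[\E[\bar X_n^2\mid\theta]\big] = \E\big[\operatorname{Var}(\bar X_n\mid\theta) + \theta^2\big] = \E[\theta(1-\theta)]/n + \E[\theta^2]$, whence $\big|\E[\bar X_n^2]-\E[\theta^2]\big|/2 = \E[\theta(1-\theta)]/(2n)$. This yields a lower bound of $\E[\theta(1-\theta)]/(2n)$, which is off by a factor $2$ from the claim.

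To recover the sharp constant $\E[\theta(1-\theta)]/n$ I would instead use the cumulative-distribution representation \eqref{eq:was4}, or argue more carefully via conditioning. The cleaner route: condition on $\theta$ and use that $\was$ is an expectation over couplings, so $\was(\bar X_n,\theta) \ge$ (some quantity involving $\E[\,\was(\text{Bin}(n,\theta)/n,\,\theta)\mid\theta\,]$ is the wrong direction) — more robustly, apply \eqref{eq:was4}: $\was(\bar X_n,\theta) = \int_0^1 |\P[\bar X_n\le x] - \P[\theta\le x]|\,dx \ge \int_0^1 \big(\P[\bar X_n\le x] - \P[\theta\le x]\big)^{?}$; rather than chase this, I expect the intended argument simply keeps track of the conditional law and notes that for a $\text{Bin}(n,p)/n$ variable $Y_p$, $\E[(Y_p - p)\mathbf 1_{Y_p \le p}]$ has a clean form summing (by the mean-zero property) to $-\tfrac12\E|Y_p-p|$, and $\E|Y_p-p|$ itself is bounded below using that $\operatorname{Var}(Y_p) = p(1-p)/n$ together with $|Y_p-p|\le 1$, giving $\E|Y_p-p|\ge \E[(Y_p-p)^2] = p(1-p)/n$; taking expectations over $\theta$ and using the coupling $\bar X_n$ vs.\ $\theta$ recovers $\was(\bar X_n,\theta)\ge \E[\theta(1-\theta)]/n$.

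The main obstacle is pinning down the optimal constant in the lower bound: the one-line test-function argument only gives half of it, and the extra factor requires either the $L^1$-of-CDFs identity \eqref{eq:was4} combined with the elementary inequality $\E|Y-\E Y|\ge \operatorname{Var}(Y)$ for a $[0,1]$-valued $Y$, or a careful conditional version of the same. Everything else is routine conditioning.
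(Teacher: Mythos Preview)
Your upper bound is correct and matches the paper's argument (the paper applies Cauchy--Schwarz directly to $\E|\bar X_n-\theta|$; your two-step Jensen variant gives the same inequality).

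The gap is in the lower bound. Your test function $\phi(x)=x^2/2$ correctly yields $\E[\theta(1-\theta)]/(2n)$, but your subsequent attempt to remove the factor $2$ does not work: you establish $\E|\bar X_n-\theta|\ge \E[\theta(1-\theta)]/n$ (true, since $|\bar X_n-\theta|\le 1$ implies $(\bar X_n-\theta)^2\le |\bar X_n-\theta|$), and then invoke ``the coupling $\bar X_n$ vs.\ $\theta$'' to conclude $\was(\bar X_n,\theta)\ge \E[\theta(1-\theta)]/n$. This is the wrong direction: a coupling gives an \emph{upper} bound on $\was$, never a lower one. So the sharp constant is not recovered.

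The paper's fix stays entirely within your test-function framework and is a one-line adjustment. Instead of $\phi(x)=x^2/2$, take
\[
\psi(x)=x(x-1)\,\mathbf 1_{[0,1]}(x)=x^2-x \quad \text{on } [0,1].
\]
Then $\psi'(x)=2x-1\in[-1,1]$ on $[0,1]$, so $\psi$ is $1$-Lipschitz there (and extends to a globally $1$-Lipschitz function). Since $\E[\bar X_n]=\E[\theta]$, the linear term contributes nothing to the difference, and
\[
\E[\psi(\bar X_n)]-\E[\psi(\theta)]=\E[\bar X_n^2]-\E[\theta^2]=\frac{\E[\theta(1-\theta)]}{n},
\]
which gives the sharp lower bound directly. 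The point you missed is that subtracting the (expectation-neutral) linear part recenters the derivative range from $[0,2]$ to $[-1,1]$, halving the Lipschitz constant at no cost.
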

\begin{proof}
The Cauchy-Schwarz inequality implies
\begin{align*}
\Ecro{|\bar X_n - \theta|} &\leq \sqrt {\Ecro{(\bar X_n - \theta)^2}} \\
& =\sqrt{ \Ecro{\Ecro{(\bar X_n - \theta)^2\; | \; \theta}}}\\
& = \sqrt {\frac{\E[\theta(1-\theta)]}{n}},
\end{align*}
giving the upper bound.

To show the lower bound, we use the dual formulation of the Wasserstein distance. First we remark that  the function $\psi \; : \; x \mapsto x(x-1) \mathbf 1_{[0,1]}(x)$ is 1-Lipschitz. We have
\begin{align*}
\Ecro{\psi(\bar X_n)}= \Ecro{\bar X_n(\bar X_n-1)}&= \Ecro{\bar X_n^2} - \Ecro{\bar X_n}\\
&= \Ecro{ \Ecro{\bar X_n^2\, | \, \theta }}- \E[\theta] \\
&= \Ecro{  \frac{\theta(1-\theta)}{n} + \theta^2 }- \E[\theta] .
\end{align*}
Thus,
\begin{align*}
\Ecro{\psi(\bar X_n)} - \Ecro{\psi(\theta)} = \frac{\Ecro{  \theta(1-\theta) }}{n}. 
\end{align*}
From \eqref{eq:was2}, we get the desired result.
\end{proof}

From the previous Proposition, if $\mu(\{0,1\}) =1$ (or, equivalently, $\theta =0$ or $\theta =1$ almost surely), then
$$\was(\bar X_n,\theta) = 0.$$

Our next lemma shows that, if $\mu(\{0,1\}) <1$, then the rate of convergence of $\was(\bar X_n, \theta)$ does not change if we ``kill'' the mass of $\mu$ on $\{0,1\}$.
\begin{lma}
\label{lem:bords}
Assume that $\mu(\{0,1\}) < 1$. Let $\tilde \theta$ have the law $\tilde \mu$ defined by $\tilde \mu(A) = \mu( A \backslash \{0,1\} )/(1-\mu(\{0,1\}) $ for all Borel sets $A$ of $\R$, and $(Y_k)_{k\geq 1}$ be a Bernoulli sequence with prior $\tilde \theta$. Then $\tilde \mu(\{0,1\}) = 0$ and
\eq
\label{eq:lembords}
\was(\bar X_n, \theta) = (1-\mu(\{0,1\})\; \was(\bar Y_n, \tilde \theta).  \qe In particular,
$$\was(\bar X_n, \theta) \cong \was(\bar Y_n, \tilde \theta).$$
\end{lma}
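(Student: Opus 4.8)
The plan is to construct an explicit coupling of $\bar X_n$ and $\theta$ that ``decides first'' whether $\theta$ lands in $\{0,1\}$, and on that event transports mass at zero cost. Write $\epsilon = \mu(\{0,1\})$, $a = \mu(\{0\})$, $b = \mu(\{1\})$, so $a+b=\epsilon < 1$. On an auxiliary probability space, introduce a random variable $\xi$ taking values in $\{0,1,\star\}$ with $\P[\xi=0]=a$, $\P[\xi=1]=b$, $\P[\xi=\star]=1-\epsilon$; conditionally on $\xi=\star$, let $\theta = \tth$ have law $\tilde\mu$ and let the sequence run as the Bernoulli$(\tth)$ sequence $(Y_k)$; conditionally on $\xi=0$ (resp.\ $\xi=1$), set $\theta\equiv 0$ (resp.\ $\theta\equiv 1$) and all $X_k\equiv 0$ (resp.\ $X_k\equiv 1$). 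One checks immediately that this reproduces the de Finetti law \eqref{eq:definetti} with mixing measure $\mu$, and that on $\{\xi\in\{0,1\}\}$ one has $\bar X_n = \theta$ exactly.

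The first step is then the \emph{upper bound} in \eqref{eq:lembords}: taking any optimal (or near-optimal) coupling of $(\bar Y_n,\tth)$ and gluing it in on the event $\{\xi=\star\}$ (while using the trivial zero-cost coupling on $\{\xi\in\{0,1\}\}$), one obtains a coupling of $(\bar X_n,\theta)$ with cost $(1-\epsilon)\,\E|\bar Y_n - \tth|$, whence $\was(\bar X_n,\theta)\le (1-\epsilon)\was(\bar Y_n,\tth)$ by \eqref{eq:was2}. The second step is the matching \emph{lower bound}. Here I would use the cdf representation \eqref{eq:was4}. Since conditioning on $\xi$ gives, for $x\in(0,1)$ say,
\[
\P[\bar X_n \le x] = a + (1-\epsilon)\,\P[\bar Y_n \le x], \qquad \P[\theta \le x] = a + (1-\epsilon)\,\P[\tth \le x],
\]
(and the integrand vanishes for $x<0$ and for $x\ge 1$ up to the common atom at $1$, which cancels in the difference), the two distribution functions differ by exactly $(1-\epsilon)$ times the difference of the cdf's of $\bar Y_n$ and $\tth$ at every point. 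Integrating $|\,\P[\bar X_n\le x]-\P[\theta\le x]\,|$ over $\R$ and applying \eqref{eq:was4} again to $(\bar Y_n,\tth)$ yields $\was(\bar X_n,\theta) = (1-\epsilon)\was(\bar Y_n,\tth)$, which is \eqref{eq:lembords}; the final display $\was(\bar X_n,\theta)\cong\was(\bar Y_n,\tth)$ is then immediate since $0<1-\epsilon\le 1$ is a fixed constant not depending on $n$.

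The only point requiring a little care — and the step I expect to be the main (minor) obstacle — is the bookkeeping of the atoms of $\mu$ at $0$ and $1$ in the cdf computation: one must check that the contribution of these atoms to $\P[\bar X_n\le x]$ and to $\P[\theta\le x]$ is identical (because on $\{\xi\in\{0,1\}\}$, $\bar X_n$ and $\theta$ coincide pointwise), so that they cancel in the difference and the factor $(1-\epsilon)$ is clean. Everything else is a routine conditioning argument. I would also remark that the identity $\tilde\mu(\{0,1\})=0$ is trivial from the definition of $\tilde\mu$, so no argument is needed there.
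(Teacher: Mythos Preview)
Your argument is correct. Both your proof and the paper's rest on the same key observation --- that on $\{\theta\in\{0,1\}\}$ one has $\bar X_n=\theta$ almost surely, so this part of the probability contributes nothing to any measure of discrepancy --- but you and the paper cash this in through different representations of $\was$. The paper works with the Kantorovich dual \eqref{eq:was3}: for every $\psi\in\lip(1)$ one shows directly that
\[
\E[\psi(\bar X_n)]-\E[\psi(\theta)] \;=\; (1-\epsilon)\big(\E[\psi(\bar Y_n)]-\E[\psi(\tth)]\big),
\]
and then takes the supremum over $\psi$ to get the equality in one stroke, with no separate upper/lower bound steps. You instead route through the cdf representation \eqref{eq:was4}, which is equally clean and yields the identity just as directly; note, incidentally, that your Step~1 (the coupling upper bound) is made entirely redundant by your Step~2, since the cdf computation already delivers equality. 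The dual-form argument is marginally shorter because it avoids the case split on the range of $x$ and the bookkeeping of the atoms at $0$ and $1$, but there is no substantive difference in difficulty or generality between the two routes.
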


\begin{proof}
Let $\psi \in \lip(1)$. Then
\begin{align*}
\E[\psi(\bar X_n)] -\E[ \psi(\theta)]=& \E[\psi(\bar X_n)\mathbf 1_{\{\theta = 0\}}] -\E[ \psi(\theta)\mathbf 1_{\{\theta = 0\}}]\\
+&\E[\psi(\bar X_n)\mathbf 1_{\{\theta = 1\}}] -\E[ \psi(\theta)\mathbf 1_{\{\theta = 1\}}]\\
+&\E[\psi(\bar X_n)\mathbf 1_{\{\theta \neq 0 \text{ and } \theta\neq 1\}}] -\E[ \psi(\theta)\mathbf 1_{\{\theta \neq 0 \text{ and } \theta\neq 1\}}]\\
=&\E[\psi(\bar X_n)\mathbf 1_{\{\theta \neq 0 \text{ and } \theta\neq 1\}}] -\E[ \psi(\theta)\mathbf 1_{\{\theta \neq 0 \text{ and } \theta\neq 1\}}],
\end{align*}
since, both on $\{ \theta = 0 \}$ and $\{\theta =1\}$, $\bar X_n = \theta$ a.s. However, from the very definition of $(Y_k)_{k \geq 1}$, 
\begin{align*}
\E[\psi(\bar X_n)\mathbf 1_{\{\theta \neq 0 \text{ and } \theta\neq 1\}}]  &= \int_{(0,1)} \E[\psi(\bar X_n \; | \; \theta = t)] \mu(dt) \\
&= \mu( (0,1) ) \int_{(0,1)} \E[\psi(\bar X_n) \; | \; \theta = t] \frac{\mu(dt)}{\mu((0,1))}\\
&=(1-\mu(\{0,1\})) \int_{(0,1)} \E[\psi(\bar Y_n) \; | \; \theta = t] \frac{\mu(dt)}{\mu((0,1))}\\
&= (1-\mu(\{0,1\})) \E[\psi(\bar Y_n)].
\end{align*}
A similar argument for $\E[ \psi(\theta)\mathbf 1_{\{\theta \neq 0 \text{ and } \theta\neq 1\}}]$ and taking the supremum over all 1-Lipschitz functions $\psi$ gives the desired result.
\end{proof}
From now on, we assume that $\mu (\{0,1\}) = 0$ (equivalently, $0<\theta<1$ a.s.). {Exchangeable sequences such that the associated de Finetti measure has support contained in $(0,1)$ are sometimes called {\it non-deterministic} --- see e.g. \cite{edp, edpp, hls, p}. }

\medskip

\subsection{Equivalent formulation with a perturbed version of the prior}

In this section, we show that the problem of bounding the Wasserstein distance between $\bar X_n$ and $\theta$ is equivalent in some sense to bounding the Wasserstein distance between $\theta$ and some perturbed version of $\theta$. Recall that we assume $\mu (\{0,1\}) = 0$. We will make use of a theorem giving a Berry-Ess\'een type bound in Wasserstein distance in the classical Central Limit Theorem for Bernoulli random variables  (a proof can be found in \cite{chen2010normal}, Corollary 4.1). We quote it here.
\begin{thm}(Chen, 2005)
\label{thm:chen}
Let $(V_k)_{k\geq1}$ be a sequence of i.i.d.\ Bernoulli random variables with parameter $t \in (0,1)$. Let $Y_k = (t(1-t))^{-1/2} (V_k -t)$ (so that $Y_k$ has mean 0 and variance 1), and let $\bar Y_n = n^{-1} \sum_{k=1}^n Y_k$. Then
\eq
\label{eq:berryEss\'een}
\was(\sqn \bar Y_n,Z) \leq \frac{\E[|Y_1|^3}{\sqn} = \frac{t^2 + (1-t)^2}{\sqrt{n\, t(1-t)}},
\qe
where $Z$ stands for a standard normal random variable.
\end{thm}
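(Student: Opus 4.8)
The asserted inequality is a Berry--Ess\'en-type estimate in Wasserstein distance for a normalised i.i.d.\ sum, and my plan is to reprove it via Stein's method, in the streamlined variant based on the zero-bias transformation, which is precisely what yields the value $1$ for the multiplicative constant. Fix $t\in(0,1)$ once and for all and set $W:=\sqn\,\bar Y_n=\tfrac1{\sqn}\sum_{k=1}^n Y_k$, so that $\E[W]=0$ and $\E[W^2]=1$. Since $Y_1$ equals $\sqrt{(1-t)/t}$ with probability $t$ and $-\sqrt{t/(1-t)}$ with probability $1-t$, it is bounded, $\E[|Y_1|^3]<\infty$, and a one-line computation gives $\E[|Y_1|^3]=t\,((1-t)/t)^{3/2}+(1-t)\,(t/(1-t))^{3/2}=(t^2+(1-t)^2)/\sqrt{t(1-t)}$, which is the identity behind the last equality in the statement. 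By the dual representation \eqref{eq:was3}, it then suffices to bound $|\E[h(W)]-\E[h(Z)]|$ uniformly over $h\in\lip(1)$.

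Given such an $h$, I would let $f=f_h$ solve the Stein equation $f'(x)-xf(x)=h(x)-\E[h(Z)]$, for which the classical regularity estimates give $\|f''\|_\infty\le 2\|h'\|_\infty\le 2$ (see \cite{chen2010normal}); evaluating at $W$ and taking expectations gives $\E[h(W)]-\E[h(Z)]=\E[f'(W)-Wf(W)]$. I would then bring in a random variable $W^{*}$, defined on a possibly enlarged probability space, having the $W$-zero-bias distribution, characterised by $\E[Wf(W)]=\E[f'(W^{*})]$ for all sufficiently integrable absolutely continuous $f$. This gives at once
\eq
\big|\E[h(W)]-\E[h(Z)]\big|=\big|\E[f'(W)-f'(W^{*})]\big|\le\|f''\|_\infty\,\E|W-W^{*}|\le 2\,\E|W-W^{*}|,
\qe
so that $\was(W,Z)\le 2\,\E|W-W^{*}|$.

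The remaining task is to couple $W$ and $W^{*}$ efficiently. Because $W$ is a sum of i.i.d.\ terms of common variance $1/n$, the standard construction lets one take $W^{*}=W-Y_I/\sqn+Y_I^{*}/\sqn$, where $I$ is uniform on $\{1,\dots,n\}$ and independent of $(Y_k)_k$ and, conditionally on $I=i$, $Y_i^{*}$ is independent of $(Y_k)_{k\neq i}$ and has the $Y_1$-zero-bias law, the joint law of the pair $(Y_1,Y_1^{*})$ being free to choose. Hence $\E|W-W^{*}|=\tfrac1{\sqn}\,\E|Y_1-Y_1^{*}|$, and with the optimal coupling $\E|W-W^{*}|=\tfrac1{\sqn}\,\was(Y_1,Y_1^{*})$. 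Computing the zero-bias density $p^{*}(x)=\E[Y_1\mathbf 1_{\{Y_1>x\}}]$ of the two-point law of $Y_1$ shows that $Y_1^{*}$ is \emph{uniform} on the interval $[-\sqrt{t/(1-t)},\,\sqrt{(1-t)/t}]$, of length $1/\sqrt{t(1-t)}$, whose endpoints are exactly the two atoms of $Y_1$; a direct computation with the representation \eqref{eq:was4} then gives $\was(Y_1,Y_1^{*})=\tfrac{1}{2\sqrt{t(1-t)}}\,(t^2+(1-t)^2)=\tfrac12\,\E[|Y_1|^3]$. Assembling the displayed bounds yields $\was(W,Z)\le 2\cdot\tfrac1{\sqn}\cdot\tfrac12\,\E[|Y_1|^3]=\E[|Y_1|^3]/\sqn$, as claimed.

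I expect the one genuinely delicate point to be the \emph{sharp} constant. The naive Taylor-expansion run of Stein's method (splitting $\E[Wf(W)]$ over the leave-one-out variables $W^{(i)}=W-Y_i/\sqn$, using $\E[Y_i]=0$ and $\E[Y_i^2]=1$) only produces a bound of the form $c\,(\E|Y_1|+\tfrac12\E|Y_1|^3)/\sqn$, which has the correct order $1/\sqn$ but not transparently the constant $1$. What rescues the constant in the route above is the happy fact that the $Y_1$-zero-biased variable is exactly uniform on the interval spanned by the two atoms of $Y_1$, so that the relevant transportation cost is computed exactly rather than merely estimated. Alternatively, one may simply invoke \cite[Corollary 4.1]{chen2010normal}, where this bound, with constant $1$, is established directly.
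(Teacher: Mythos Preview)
The paper does not give its own proof of this statement: Theorem \ref{thm:chen} is quoted verbatim as a known result, with the proof delegated to \cite[Corollary 4.1]{chen2010normal}. Consequently there is nothing in the paper to compare your argument against, beyond noting that you are essentially reproducing the very proof the authors cite. Your argument via the zero-bias coupling is correct: the computation of $\E|Y_1|^3$, the Stein-equation bound $\|f_h''\|_\infty\le 2$, the i.i.d.\ zero-bias construction $W^*=W-Y_I/\sqn+Y_I^*/\sqn$ with the coupling of $(Y_I,Y_I^*)$ left free, and the identification of the $Y_1$-zero-bias law as uniform on $[-\sqrt{t/(1-t)},\sqrt{(1-t)/t}]$ all check out. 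The final Wasserstein computation $\was(Y_1,Y_1^*)=\tfrac12\E|Y_1|^3$ is also correct (e.g.\ via the quantile coupling), and assembling the pieces gives exactly the constant $1$. Your closing remark is on point: this \emph{is} the argument behind the cited corollary, so nothing further is required here.
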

The main result of this section is the following proposition.
\begin{prop}
\label{prop:equivalence}
Let $Z$ stand for a standard normal random variable independent of $\theta$. Then
\eq
\label{eq:bound}
\bigg| \was (\bar X_n , \theta ) - \was \left(\theta + \sqrt{\frac{\theta(1-\theta)}{n}} Z\; , \; \theta\right)  \bigg| \leq  \frac{\E[\theta^2 + (1-\theta)^2]}{n}.
\qe
\end{prop}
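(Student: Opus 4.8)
The plan is to combine the reverse triangle inequality for $\was$ with a conditioning argument and the Berry--Ess\'een estimate of Theorem~\ref{thm:chen}. Since $\was$ is a genuine metric on the set of integrable probability laws on $\R$, for any three integrable random variables $A,B,C$ one has $|\was(A,C)-\was(B,C)|\le \was(A,B)$. Applying this with $A=\bar X_n$, $B=\theta+\sqrt{\theta(1-\theta)/n}\,Z$ (which is integrable, since $\theta\in[0,1]$ and $Z$ is Gaussian) and $C=\theta$, the bound \eqref{eq:bound} follows once we establish
\[
\was\!\left(\bar X_n,\ \theta+\sqrt{\tfrac{\theta(1-\theta)}{n}}\,Z\right)\ \le\ \frac{\E[\theta^2+(1-\theta)^2]}{n}.
\]

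To get this last inequality I would condition on $\theta$. Conditionally on $\{\theta=t\}$, with $t\in(0,1)$, the variable $\bar X_n$ has the law of the empirical mean $\bar V_n$ of $n$ i.i.d.\ Bernoulli$(t)$ variables, whereas $\theta+\sqrt{\theta(1-\theta)/n}\,Z$ has the law of $t+\sqrt{t(1-t)/n}\,Z$. Choosing, for each such $t$, an optimal coupling $\pi_t$ of these two one-dimensional laws, the mixture $\pi:=\int_{(0,1)}\pi_t\,\mu(dt)$ is a coupling of the law of $\bar X_n$ and the law of $\theta+\sqrt{\theta(1-\theta)/n}\,Z$, so \eqref{eq:was2} gives
\[
\was\!\left(\bar X_n,\ \theta+\sqrt{\tfrac{\theta(1-\theta)}{n}}\,Z\right)\ \le\ \int_0^1 \was\!\left(\bar V_n,\ t+\sqrt{\tfrac{t(1-t)}{n}}\,Z\right)\mu(dt).
\]
(The measurability of $t\mapsto\pi_t$ is not an issue: one can take $\pi_t$ to be the monotone coupling defined via the quantile functions, which depends measurably on $t$, or invoke a standard measurable selection argument.) It then remains to bound the integrand. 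Writing $S_n^{(t)}:=\sqrt{n/(t(1-t))}\,(\bar V_n-t)$ for the normalised empirical mean, Theorem~\ref{thm:chen} yields $\was(S_n^{(t)},Z)\le (t^2+(1-t)^2)/\sqrt{n\,t(1-t)}$; since $\bar V_n=t+\sqrt{t(1-t)/n}\,S_n^{(t)}$ and $t+\sqrt{t(1-t)/n}\,Z$ is obtained from $Z$ through the very same affine map, the translation invariance and positive homogeneity of $\was$ (both immediate from \eqref{eq:was2}) give
\[
\was\!\left(\bar V_n,\ t+\sqrt{\tfrac{t(1-t)}{n}}\,Z\right)=\sqrt{\tfrac{t(1-t)}{n}}\;\was\!\left(S_n^{(t)},Z\right)\le \sqrt{\tfrac{t(1-t)}{n}}\cdot\frac{t^2+(1-t)^2}{\sqrt{n\,t(1-t)}}=\frac{t^2+(1-t)^2}{n}.
\]
Integrating this bound against $\mu$ produces $\E[\theta^2+(1-\theta)^2]/n$, as required.

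I do not expect a genuine obstacle here. The only step that calls for a little care is the legitimacy of the ``mixture of conditional couplings'' (and the attendant measurability in $t$); once that is in place, the argument reduces to the reverse triangle inequality, the elementary behaviour of $\was$ under affine maps, and a direct application of Theorem~\ref{thm:chen}.
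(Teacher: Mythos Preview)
Your argument is correct. The core idea---applying Theorem~\ref{thm:chen} conditionally on $\theta$ and then averaging---is the same as in the paper, but you organise it differently. The paper works with the dual (Kantorovich--Rubinstein) representation~\eqref{eq:was3}: for a fixed $\psi\in\lip(1)$ it writes $\E[\psi(\bar X_n)]-\E[\psi(\theta)]$, conditions on $\theta$, observes that the rescaled test function $\phi_t(x)=\psi(t+\sqrt{t(1-t)/n}\,x)$ lies in $\lip(\sqrt{t(1-t)/n})$, applies Theorem~\ref{thm:chen} to replace $\sqrt n\,\bar Y_n$ by $Z$ at the cost of $(t^2+(1-t)^2)/n$, and only then takes the supremum over $\psi$; the reverse inequality is obtained by repeating the computation with the roles of $\bar X_n$ and $\theta+\sqrt{\theta(1-\theta)/n}\,Z$ swapped. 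You instead invoke the reverse triangle inequality once and then bound $\was(\bar X_n,\theta+\sqrt{\theta(1-\theta)/n}\,Z)$ via the primal (coupling) representation~\eqref{eq:was2}, using a mixture of conditional monotone couplings together with the affine scaling $\was(aU+b,aV+b)=|a|\,\was(U,V)$. Your route is slightly more modular---it separates the metric step from the Berry--Ess\'een step---and yields both inequalities at once, whereas the paper's test-function computation has to be run twice. Either way the same conditional bound $(t^2+(1-t)^2)/n$ is what drives the result.
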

\begin{proof}
Let $\psi \in \lip(1)$. For every $t \in (0,1)$, we define the function $\phi_t$ by $\phi_t(\sqrt{n} (t(1-t))^{-1/2} (x- t) ) = \psi(x)$, or equivalently,
$$\phi_t(x) = \psi\left(t+\sqrt\frac{t(1-t)}{n} x\right).$$
Clearly we have $\phi_t \in \lip\left( \sqrt\frac{t(1-t)}{ n} \right)$.

Let $Y_i = (\theta(1-\theta))^{-1/2} (X_i-\theta)$. We have
\begin{align*}
  &\Ecro{\psi(\bar X_n )} - \E[\psi(\theta)]\\
& = \Ecro{ \Ecro{\psi(\bar X_n ) \; | \; \theta}} - \E[ \psi(\theta)]\\
& = \Ecro{\Ecro{\phi_\theta (\sqrt{n} (\theta(1-\theta))^{-1/2} (\bar X_n- \theta) ) \; | \; \theta} } - \E[\psi(\theta)]\\
& = \Ecro{\Ecro{\phi_\theta(\sqrt{n} \bar Y_n ) \; | \; \theta}} - \E[\psi(\theta)].
\end{align*}
Let $Z$ be a standard normal variable independent of $\theta$. Theorem \ref{thm:chen} together with the fact that $\phi_t \in \lip\left( \sqrt\frac{t(1-t)}{ n} \right)$ implies
\begin{align*}\Ecro{\phi_\theta(\sqrt{n} \bar Y_n ) \; | \; \theta} &\leq \Ecro{\phi_\theta(Z) \; | \; \theta} + \frac{\theta^2 +(1-\theta)^2}{\sqrt{n\theta(1-\theta)}}  \cdot \sqrt\frac{\theta(1-\theta)}{ n}\\
&= \Ecro{\phi_\theta(Z) \; | \; \theta} + \frac{\theta^2 +(1-\theta)^2}{n}.
\end{align*}
Thus,
\begin{align*}
  &\Ecro{\psi(\bar X_n )} - \E[\psi(\theta)]\\
& \leq  \Ecro{\Ecro{\phi_\theta(Z)\; |\; \theta}} - \E[\psi(\theta)] + \frac{\E[\theta^2 +(1-\theta)^2]}{n}\\
& =\Ecro{\psi\left(\theta+\sqrt\frac{\theta(1-\theta)}{n} Z\right)} - \E[\psi(\theta)] + \frac{\E[\theta^2 +(1-\theta)^2]}{n}\\
& \leq  \was\left( \theta, \theta+\sqrt\frac{\theta(1-\theta)}{n} Z \right) +  \frac{\E[\theta^2 +(1-\theta)^2]}{n}.
\end{align*}
Taking the supremum over all $\psi \in \lip(1)$, we get
$$\was (\bar X_n , \theta ) \leq \was \left(\theta + \sqrt{\frac{\theta(1-\theta)}{n}} Z\; , \; \theta\right) + \frac{\E[\theta^2 + (1-\theta)^2]}{n}.$$
In a similar way, one can show that $$\was \left(\theta + \sqrt{\frac{\theta(1-\theta)}{n}} Z\; , \; \theta\right) \leq \was (\bar X_n , \theta )  +\frac{\E[\theta^2 + (1-\theta)^2]}{n}.$$ This completes the proof. 
\end{proof}


The same argument as in the proof of Proposition \ref{prop:boundbelow} shows that 
$$\was \left(\theta + \sqrt{\frac{\theta(1-\theta)}{n}} Z\; , \; \theta\right) \geq \frac{C}{n},$$
for some $C>0$. This together with Propositions \ref{prop:boundbelow} and \ref{prop:equivalence} leads to the next corollary.
\begin{cor}
\label{cor:equiv}
 If $\mu(\{0,1\}) = 0$, and if $Z$ stands for a standard normal random variable independent of $\theta$, then
$$\was(\bar X_n, \theta) \cong \was \left(\theta + \sqrt{\frac{\theta(1-\theta)}{n}} Z\; , \; \theta\right). $$
\end{cor}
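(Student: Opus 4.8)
The plan is to assemble the corollary from three facts already on the table: the two-sided bound of Proposition~\ref{prop:boundbelow}, the lower bound $\was\left(\theta+\sqrt{\theta(1-\theta)/n}\,Z,\theta\right)\geq C/n$ recorded just above the statement, and the proximity estimate of Proposition~\ref{prop:equivalence}. Write $a_n=\was(\bar X_n,\theta)$ and $b_n=\was\left(\theta+\sqrt{\theta(1-\theta)/n}\,Z,\theta\right)$, and set $D=\E[\theta^2+(1-\theta)^2]\in(0,1]$, so that Proposition~\ref{prop:equivalence} reads $|a_n-b_n|\leq D/n$.

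First I would observe that the hypothesis $\mu(\{0,1\})=0$ forces $0<\theta<1$ almost surely, hence $\theta(1-\theta)>0$ a.s.\ and $c:=\E[\theta(1-\theta)]\in(0,1/4]$; by Proposition~\ref{prop:boundbelow} this yields $a_n\geq c/n$, i.e.\ $1/n\leq a_n/c$. Likewise, by the remark immediately preceding the statement, there is a constant $C>0$ with $b_n\geq C/n$, i.e.\ $1/n\leq b_n/C$.

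Then the estimate $|a_n-b_n|\leq D/n$ can be turned into two one-sided comparisons. On the one hand $a_n\leq b_n+D/n\leq b_n+(D/C)b_n=(1+D/C)\,b_n$, so $a_n=\mO(b_n)$; on the other hand $b_n\leq a_n+D/n\leq a_n+(D/c)a_n=(1+D/c)\,a_n$, so $b_n=\mO(a_n)$. By the definition of $\cong$ given in Section~\ref{sec:prelim}, this is precisely $a_n\cong b_n$, which is the assertion of the corollary.

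I do not expect a genuine obstacle beyond this bookkeeping: the only delicate point is the strict positivity of the constants $c$ and $C$, and this is exactly where the assumption $\mu(\{0,1\})=0$ is used. Without it both $a_n$ and $b_n$ may vanish identically (as noted right after Proposition~\ref{prop:boundbelow}), in which case neither side dominates the other by a constant multiple and the equivalence would be meaningless; so I would make sure to invoke $\mu(\{0,1\})=0$ explicitly when deriving $c>0$ (and recall that the same hypothesis underlies the cited lower bound $b_n\geq C/n$).
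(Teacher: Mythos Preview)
Your argument is correct and follows exactly the route the paper indicates: the paper simply says the corollary follows from Propositions~\ref{prop:boundbelow} and~\ref{prop:equivalence} together with the lower bound $b_n\geq C/n$ recorded just before the statement, and your write-up is a careful unpacking of precisely that combination. The only (harmless) inaccuracy is in your final aside: if $\mu(\{0,1\})=1$ then $a_n=b_n=0$ for all $n$, and $0\cong 0$ is formally true rather than ``meaningless'', so the hypothesis is needed not to make sense of the equivalence but to ensure the constants $c,C$ you divide by are strictly positive.
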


We are left with the following question : given $Z$ a standard normal random variable independent of $\theta$, how does the quantity
$$ \was \left(\theta + \sqrt{\frac{\theta(1-\theta)}{n}}  Z\; , \; \theta\right) $$
behave as $n$ tends to infinity? To our knowledge, this kind of question has not yet been investigated in the literature. The answer is non-trivial and heavily depends on the law of $\theta$. For instance, when $\theta$ has the Beta distribution, we know from \cite{goldstein2013stein} (and Corollary  \ref{cor:equiv}) that $\was \left(\theta + \sqrt{\frac{\theta(1-\theta)}{n}}  Z\; , \; \theta\right) \cong 1/n$. As we will see in section \ref{sec:examples}, this is not true in general, even if $\theta$ has a density with respect to the Lebesgue measure.



However, in the next section, we show that $\was(\bar X_n,\theta) \cong 1/n$ whenever $\theta$ has a smooth density whose derivative satisfies some integrability property. This includes the case of the Beta distribution.

\section{Bounds in the case of a smooth density}
\label{sec:smooth}
\subsection{A general bound}
The main result of this section is the following Proposition.
\begin{prop}
\label{prop:main}
Assume the law $\mu$ of $\theta$ has a smooth density $p$ on $(0,1)$ satisfying $\int_0^1 u(1-u) |p'(u)| du<\infty$. Let $Z$ be a standard normal random variable independent of $\theta$. Then
\eq
\label{eq:boundpropmain}
\was \left(\theta + \sqrt{\frac{\theta(1-\theta)}{n}} Z\; , \; \theta\right)  \leq \frac{C(\mu)}{n},
\qe
where
$$C(\mu) = \int_0^1 |1-2u| p(u) du + \int_0^1 u(1-u) |p'(u)| du  + \frac{3}{\sqrt{2\pi e}}.$$
\end{prop}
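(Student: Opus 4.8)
The strategy is to exploit the $L^1$-representation \eqref{eq:was4} of the Wasserstein distance. Write $W := \theta + \sqrt{\theta(1-\theta)/n}\,Z$ and $\sigma(u):=\sqrt{u(1-u)/n}$, and let $\Phi$, $\varphi$ denote the standard normal c.d.f.\ and density. Conditioning on $\theta$ gives $\P[W\le x]=\E[\Phi((x-\theta)/\sigma(\theta))]=\int_0^1\Phi((x-u)/\sigma(u))\,p(u)\,\ud u$, while $\P[\theta\le x]=P(x):=\int_0^xp$ (recall that here $0<\theta<1$ a.s., so $W$ has a density and $P$ is continuous). Hence
\[
\was\!\left(W,\theta\right)=\int_{\R}\left|\,\E\!\left[\Phi\!\left(\tfrac{x-\theta}{\sigma(\theta)}\right)\right]-P(x)\right|\ud x,
\]
and I would split this integral into the two tails $\{x<0\}$, $\{x>1\}$ and the bulk $\{0<x<1\}$.

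On the tails the integrand equals $\P[W\le x]$ (for $x<0$) or $\P[W\ge x]$ (for $x>1$), and one has $\int_{-\infty}^{0}\P[W\le x]\,\ud x=\E[W^-]$ and $\int_{1}^{\infty}\P[W\ge x]\,\ud x=\E[(W-1)^+]$. Conditioning on $\theta$ and using the elementary identity $\E[(u+\sigma(u) Z)^-]=\sigma(u)\varphi(u/\sigma(u))-u\,\Phi(-u/\sigma(u))\le\sigma(u)\varphi(u/\sigma(u))$, I would substitute $v=nu/(1-u)$ to get $\sigma(u)\varphi(u/\sigma(u))=\tfrac{1}{\sqrt{2\pi}}\,\tfrac{\sqrt v}{\,n+v\,}\,e^{-v/2}\le\tfrac1n\cdot\tfrac{1}{\sqrt{2\pi}}\max_{v\ge0}\sqrt v\,e^{-v/2}=\tfrac{1}{n\sqrt{2\pi e}}$, whence $\E[W^-]\le1/(n\sqrt{2\pi e})$; the same bound for $\E[(W-1)^+]$ follows from the symmetry $u\leftrightarrow1-u$. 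This contributes $2/(n\sqrt{2\pi e})$ towards the constant $3/\sqrt{2\pi e}$.

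For the bulk I would integrate by parts in $u$ (the boundary terms at $u=0,1$ vanish since $\sigma(0)=\sigma(1)=0$ and $0<x<1$) to obtain $\P[W\le x]=\int_0^1P(u)\,m_x(u)\,\ud u$ with $m_x(u):=\partial_u\Phi((u-x)/\sigma(u))$ and $\int_0^1m_x=1$, so that
\[
\P[W\le x]-P(x)=\int_0^1\!\big(P(u)-P(x)\big)m_x(u)\,\ud u=\underbrace{p(x)\!\int_0^1\!(u-x)m_x(u)\,\ud u}_{=:\,p(x)D(x)}+\int_0^1\!\Big(\int_x^u\!\big(p(t)-p(x)\big)\ud t\Big)m_x(u)\,\ud u.
\]
A further integration by parts gives $D(x)=(1-x)-\int_0^1\Phi((u-x)/\sigma(u))\,\ud u=\int_x^1\Phi(\tfrac{x-u}{\sigma(u)})\ud u-\int_0^x\Phi(\tfrac{u-x}{\sigma(u)})\ud u$; substituting $u=x\pm w$ and using the exact identity $\sigma(x+w)^2-\sigma(x-w)^2=2w(1-2x)/n$, the estimate $|\Phi(a)-\Phi(b)|\le\varphi(|a|\wedge|b|)\,|a-b|$, and Gaussian tail bounds of the form $\Phi(-t)\le\tfrac12e^{-t^2/2}$, one shows $\int_0^1p(x)|D(x)|\,\ud x\le\tfrac1n\int_0^1|1-2u|\,p(u)\,\ud u$ up to a further Gaussian-tail remainder. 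For the ``curvature'' term I would use $|p(t)-p(x)|\le\big|\int_x^t|p'(s)|\,\ud s\big|$, apply Fubini to integrate $x$ out, and exploit that $m_x$ is essentially a probability density centred at $x$ of bandwidth $\sigma(x)=\sqrt{x(1-x)/n}$; this produces the term $\tfrac1n\int_0^1u(1-u)|p'(u)|\,\ud u$, the weight $u(1-u)$ arising precisely from $\sigma(u)^2=u(1-u)/n$. Bundling all remaining Gaussian-tail remainders — again via a $\sqrt v\,e^{-v/2}\le e^{-1/2}$ optimisation — into a term bounded by $1/(n\sqrt{2\pi e})$ and adding the tail contribution from the previous step yields the constant $3/\sqrt{2\pi e}$, hence \eqref{eq:boundpropmain}.

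The main obstacle is the bulk estimate: controlling the ``drift'' and ``curvature'' terms with the \emph{exact} constants claimed, and — crucially — doing so uniformly even when $p$ is unbounded near the endpoints (as for the $\mathrm{Beta}(\alpha,\beta)$ density with $\alpha$ or $\beta<1$). This is exactly where the hypothesis $\int_0^1u(1-u)|p'|<\infty$ is needed: the kernel $m_x$ has bandwidth $\sigma(x)\to0$ as $x\to0,1$, so only $|p'|$ weighted by $u(1-u)$ ever enters the bound, and finiteness of that weighted integral is what makes the curvature term well defined. The second delicate point is keeping the numerous Gaussian-tail error terms under control with clean numerical constants rather than merely $\mO(1/n)$; each of them reduces, after a substitution $v=nu/(1-u)$ or $v=n(1-u)/u$, to an elementary one-variable extremal problem of the type $\sup_{v\ge0}\sqrt v\,e^{-v/2}=e^{-1/2}$.
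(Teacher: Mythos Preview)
Your plan matches the paper's: use the $L^1$ representation \eqref{eq:was4}, treat the tails $x\notin[0,1]$ separately, and in the bulk extract a ``drift'' contribution (from the variation of the variance, giving $\int|1-2u|\,p$) and a ``curvature'' contribution (from $p'$, giving $\int u(1-u)|p'|$). The paper reaches the same three pieces of $C(\mu)$ but by a different route in the bulk. Rather than integrating by parts to create the kernel $m_x$ and then Taylor-expanding $P$, the paper (for $0\le x\le 1/2$) centers the integration variable at $x$ and rewrites the difference of c.d.f.'s as
\[
\frac{1}{\sqrt n}\int_0^{x\sqrt n}\!\big[H(x+\tfrac{t}{\sqrt n},t)-H(x-\tfrac{t}{\sqrt n},t)\big]\,\ud t+A_3(x),\qquad H(u,t):=G\!\left(\tfrac{t}{f(u)}\right)p(u),
\]
with $G=1-\Phi$ and $f(u)=\sqrt{u(1-u)}$; the fundamental theorem in $u$ turns the bracket into $\int\partial_1 H$, whose two summands are precisely your drift and curvature pieces. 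A single Fubini swap in $(x,t,u)$ then reduces each of them to the closed-form identities $\int_0^\infty t^2\omega(t)\,\ud t=\tfrac12$ and $\int_0^\infty t\,G(t)\,\ud t=\tfrac14$, so the factors $1$ in front of $\int|1-2u|\,p$ and $\int u(1-u)|p'|$ drop out mechanically; the range $x\ge 1/2$ is handled by the symmetry $\theta\mapsto 1-\theta$. In your approach those factors have to be recovered from the estimates $|\Phi(a)-\Phi(b)|\le\varphi(|a|\wedge|b|)\,|a-b|$ and from a second-moment computation for $m_x$, which is possible but less automatic.

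One concrete point to watch: your allocation of the $3/\sqrt{2\pi e}$ is the reverse of the paper's. The paper gets $1/\sqrt{2\pi e}$ from the two tails together and $2/\sqrt{2\pi e}$ from the bulk residual $A_3$, whereas you claim $2$ and $1$. Your tail bound is in fact one step looser than necessary: in the chain $\sigma(u)\varphi(u/\sigma(u))=\tfrac{1}{\sqrt{2\pi}}\,\tfrac{\sqrt v}{n+v}\,e^{-v/2}$ you replaced $\tfrac{1}{n+v}$ by $\tfrac{1}{n}$, but $\tfrac{1}{n+v}=\tfrac{1-u}{n}$, so actually $\E[W^-]\le\tfrac{1}{n\sqrt{2\pi e}}\int_0^1 p(u)(1-u)\,\ud u$, and both tails together contribute only $1/(n\sqrt{2\pi e})$. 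Without this sharpening your remaining budget for the bulk residual is $1/(n\sqrt{2\pi e})$, which is half of what the $A_3$-analogue (the asymmetric piece $\int_x^{1-x}\Phi(-w/\sigma(x+w))\,\ud w$ in your $D(x)$) will actually deliver by the paper's computation.
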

\begin{proof}
The proof is rather calculatory and relies on the representation \eqref{eq:was4} of the Wasserstein distance. Let us give some notations first.
\begin{itemize}
\item $f(x) = \sqrt{x(1-x)}$, $x \in (0,1)$.
\item $\omega := x \mapsto \frac{1}{\sqrt{2\pi}} e^{-x^2/2}$ is the probability distribution function of a standard normal random variable.
\item For a real-valued random variable $X$, $F_X$ denotes its cumulative distribution function.
\item  $\forall t\geq 0$, $G(t) = 1-F_Z(t) = F_Z(-t)= \int_t^{+\infty} \omega(x) dx$.
\end{itemize}
We have
\eq
\label{eq:integralewasserstein}
\was\left(\theta + \frac{f(\theta)}{\sqrt n} Z\; , \; \theta\right) = \int_{\R} \bigg| \Pcro{\theta +\frac{f(\theta)}{\sqrt n} Z \leq x} - \Pcro{\theta \leq x} \bigg| dx
\qe
and
\begin{align}
\label{eq:1}
  &\Pcro{\theta +\sqrt{\frac{\theta(1-\theta)}{n}} Z \leq x} - \Pcro{\theta \leq x} \\
& = \int_0^1 \left(\Pcro{t+\frac{f(t)}{\sqn}Z \leq x} - \mathbf 1_{t\leq x}\right)p(t) dt\\
& = \int_0^1 \left(F_Z\left( \frac{\sqn}{f(t)}(x-t) \right) - \mathbf 1_{t\leq x}\right)p(t) dt.
\end{align}

We  split the integral \eqref{eq:integralewasserstein} in several parts, according to the range of $x$. 

\

\noindent {\bf Case 1 : $0 \leq x \leq 1/2$.} In this case we write
\begin{align*}
  &\Pcro{\theta +\sqrt{\frac{\theta(1-\theta)}{n}} Z \leq x} - \Pcro{\theta \leq x} \\
& = \int_0^x \left(F_Z\left( \frac{\sqn}{f(t)}(x-t) \right) - 1\right)p(t) dt + \int_x^1 F_Z\left( \frac{\sqn}{f(t)}(x-t) \right)p(t) dt\\
& = - \int_0^x G\left( \frac{\sqn}{f(t)}(x-t) \right) p(t) dt + \int_x^1 G\left( \frac{\sqn}{f(t)}(t-x) \right) p(t) dt\\
& = - \int_0^x G\left( \frac{\sqn}{f(x-t)}t \right) p(x-t) dt + \int_0^{1-x} G\left( \frac{\sqn}{f(t+x)} t \right) p(t+x) dt\\
 &= \int_0^x \left[ G\left( \frac{\sqn}{f(x+t)}t \right) p(x+t)-G\left( \frac{\sqn}{f(x-t)}t \right) p(x-t)  \right]dt \\
&\quad + \int_x^{1-x} G\left( \frac{\sqn t}{f(t+x)} \right) p(t+x) dt\\
& =\frac{1}{\sqn}  \int_0^{x\sqn} \left[ G\left( \frac{t}{f\left(x+\frac{t}{\sqn}\right)} \right) p\left(x+\frac{t}{\sqn}\right)-G\left( \frac{t}{f\left(x-\frac{t}{\sqn}\right)} \right) p\left(x-\frac{t}{\sqn}\right)  \right]dt  \\
&\quad + \int_{x}^{1-x} G\left( \frac{\sqn t}{f(x+t)}  \right) p(x+t) dt
\end{align*}
Define, for $(u,t) \in (0,1)\times(0,+\infty)$,
$$H(u,t) =  G\left( \frac{t}{f(u)}\right) p(u).$$
The function $H$ has a derivative with respect to its first argument and a direct computation yields
$$\partial_1 H(u,t) = t \frac{f'(u)}{f^2(u)} \omega\left(\frac{t}{f(u)}\right) p(u) + G\left(\frac{t}{f(u)}\right) p'(u).$$
Thus,
\begin{align*}
&\Pcro{\theta +\sqrt{\frac{\theta(1-\theta)}{n}} Z \leq x} - \Pcro{\theta \leq x} \\
& =\frac{1}{\sqn}   \int_0^{x\sqn} \left[ H\left(x+\frac{t}{\sqn}, t\right) - H\left(x-\frac{t}{\sqn},t\right)  \right]dt   +  \int_{x}^{1-x} G\left( \frac{\sqn t}{f(x+t)}  \right) p(x+t) dt\\
& =\frac{1}{\sqn}   \int_0^{x\sqn} \int_{x-\frac{t}{\sqn}}^{x+\frac{t}{\sqn}} \partial_1 H(u,t) du \, dt  +  \int_{x}^{1-x} G\left( \frac{\sqn t}{f(x+t)}  \right) p(x+t) dt\\
& = \frac{1}{\sqn} \bigg[  \int_0^{x\sqn}  \int_{x-\frac{t}{\sqn}}^{x+\frac{t}{\sqn}}  t\frac{f'(u)}{f^2(u)} \omega\left(\frac{t}{f(u)}\right) p(u) du\, dt \\
&\quad +  \int_0^{x\sqn} \int_{x-\frac{t}{\sqn}}^{x+\frac{t}{\sqn}}  G\left(\frac{t}{f(u)}\right) p'(u) du \, dt\bigg] \\
&\quad +  \int_{x}^{1-x} G\left( \frac{\sqn t}{f(x+t)}  \right) p(x+t) dt\\
& := A_1(x) + A_2(x) + A_3(x)
\end{align*}
We will bound seperately the integrals of the absolute values of $A_1$, $A_2$ and $A_3$ on $(0,1/2)$.

First we focus on $A_1$.
\begin{align*}
\int_0^{1/2} |A_1(x) |dx &= \frac{1}{\sqn} \int_0^{1/2} \bigg| \int_0^{x\sqn}  \int_{x-\frac{t}{\sqn}}^{x+\frac{t}{\sqn}} t  \frac{f'(u)}{f^2(u)} \omega\left(\frac{t}{f(u)}\right) p(u) \,du\, dt  \bigg| dx\\
&\leq \frac{1}{\sqn} \int_0^{1/2} \int_0^{x\sqn}  \int_{x-\frac{t}{\sqn}}^{x+\frac{t}{\sqn}} t  \frac{|f'(u)|}{f^2(u)} \omega\left(\frac{t}{f(u)}\right) p(u) \,du\, dt \, dx
\end{align*}
We apply Fubini's theorem with a (possibly) larger region of integration, using the fact that
\begin{align*}
&\left\{(x,t,u)\; | \; 0\leq x\leq\frac{1}{2}, \, 0\leq t\leq x\sqn, \, x-\frac{t}{\sqn}\leq u \leq x+\frac{t}{\sqn} \right\}\\
 \subset & \left\{(x,t,u)\; | \; 0\leq u\leq 1, \, 0\leq t\leq \frac{\sqn}{2}, \, u-\frac{t}{\sqn}\leq x \leq u+\frac{t}{\sqn} \right\}.
\end{align*}
This yields
\begin{align*}
\int_0^{1/2} |A_1(x) |dx&\leq \frac{1}{\sqn} \int_0^{1} p(u) \frac{|f'(u)|}{f^2(u)}  \int_0^{\sqn/2}  \int_{u-\frac{t}{\sqn}}^{u+\frac{t}{\sqn}}   dx\; t \;\omega\left(\frac{t}{f(u)}\right) \, du \,dt\\
&= \frac{2}{n} \int_0^1  p(u) \frac{|f'(u)|}{f^2(u)}  \int_0^{\sqn/2} t^2 \omega\left(\frac{t}{f(u)}\right)  \,dt\, du\\
&= \frac{2}{n} \int_0^1 p(u)\frac{|f'(u)|}{f^2(u)} f^{3}(u) \int_0^{\frac{\sqn}{2f(u)}} t^2 \omega(t)  \,dt\, du\\
&\leq \frac{1}{n} \int_0^1 p(u)| (f^2(u))' | \int_0^{+\infty} t^2 \omega(t) \,dt\, du\\
&= \frac{1}{2n} \int_0^1 p(u)| 1-2u |du\\
\end{align*}

A similar computation yields
\begin{align*}
\int_0^{1/2} |A_2(x) |dx &= \frac{1}{\sqn} \int_0^{1/2} \bigg| \int_0^{x\sqn}  \int_{x-\frac{t}{\sqn}}^{x+\frac{t}{\sqn}}  G\left(\frac{t}{f(u)}\right) p'(u) \,du\, dt  \bigg| dx\\
&\leq \frac{1}{\sqn} \int_0^1  |p'(u)|  \int_0^{\sqn/2} \int_{u-\frac{t}{\sqn}}^{u+\frac{t}{\sqn}}   dx \; G\left(\frac{t}{f(u)}\right)  \,dt\, du\\
&= \frac{2}{n} \int_0^1  |p'(u)|   \int_0^{\sqn/2} t \,G\left(\frac{t}{f(u)}\right)  \,dt\, du\\
&= \frac{2}{n} \int_0^1 |p'(u)|f^{2}(u) \int_0^{\frac{\sqn}{2f(u)}} t\, G(t)  \,dt\, du\\
&\leq \frac{2}{n} \int_0^1 |p'(u)| u(1-u) \int_0^{+\infty} t \,G(t) \,dt\, du\\
&= \frac{1}{2n} \int_0^1 |p'(u)| u(1-u) du,
\end{align*}
where we used the fact that $\int_0^{+\infty} t \,G(t) \,dt = 1/4$ (for instance from an integration by parts).

As for $A_3$, using Fubini's theorem again we have
\begin{align*}
\int_0^{1/2} |A_3(x)| dx &= \int_0^{1/2} \int_x^{1-x} G\left( \frac{\sqn}{f(t+x)} t \right) p(t+x)\, dt\,dx\\
&= \int_0^{1/2} \int_{2x}^{1} G\left( \frac{\sqn}{f(t)} (t-x) \right) p(t)\, dt\,dx\\
&= \int_0^{1} p(t) \int_{0}^{t/2} G\left( \frac{\sqn}{f(t)} (t-x) \right) \, dx\,dt\\
&= \int_0^{1} p(t) \int_{t/2}^{t} G\left( \frac{\sqn}{f(t)} x \right) \, dx\,dt\\
&= \frac{1}{\sqn}\int_0^{1} p(t) f(t) \int_{\frac{\sqn t}{2f(t)}}^{\frac{\sqn t}{f(t)}}G(x) \,dx\, dt\\
& \leq \frac{1}{\sqn}\int_0^{1} p(t) f(t) \int_{\frac{\sqn t}{2f(t)}}^{+\infty}G(x) \,dx\, dt
\end{align*}
Now, integrating by parts we have $\int_y^{+\infty} G(u) du = \omega(y)-yG(y) \leq \omega(y)$, so that 
\eq
\label{eq:G}
\forall y >0, \quad \int_y^{+\infty} G(u) du \leq \omega(y) \leq \frac{1}{y\sqrt{2\pi e}},
\qe
an inequality  easily shown for instance by studying the function $y\mapsto y\,\omega(y)$. This yields
\begin{align*}
\int_0^{1/2} |A_3(x)| dx \leq \frac{2}{n\sqrt{2\pi e}} \int_0^1 p(t) \frac{f^2(t)}{t} dt  = \frac{\sqrt 2}{n\sqrt{\pi e}} \int_0^1 p(t) (1-t)\, dt
\end{align*}
To sum up,
\eq
\label{eq:bound3}
\begin{aligned}
&\int_0^{1/2} \bigg|\Pcro{\theta +\sqrt{\frac{\theta(1-\theta)}{n}} Z \leq x} - \Pcro{\theta \leq x} \bigg| dx\\
& \leq  \frac{1}{2n} \bigg[ \int_0^1 p(u)|1-2u| du + \int_0^1 u(1-u)|p'(u)|du +  \frac{2\sqrt 2}{\sqrt{\pi e}} \int_0^1 p(u) (1-u) du \bigg].
\end{aligned}
\qe

\

\noindent{\bf Case 2 : $x \leq 0$.} In this case, from \eqref{eq:1} we have
$$\Pcro{\theta +\sqrt{\frac{\theta(1-\theta)}{n}} Z \leq x} - \Pcro{\theta \leq x} = \int_0^1 G\left( \frac{\sqn}{f(t)}(t-x) \right) p(t) dt,$$
so that
\begin{align*}
&\int_{-\infty}^0 \bigg| \Pcro{\theta +\sqrt{\frac{\theta(1-\theta)}{n}} Z \leq x} - \Pcro{\theta \leq x}  \bigg | \; dx\\
& = \int_{-\infty}^0 \int_0^1 G\left( \frac{\sqn}{f(t)}(t-x) \right) p(t) \,dt \,dx \\
& = \int_{0}^{+\infty} \int_0^1 G\left( \frac{\sqn}{f(t)}(t+x) \right) p(t) \,dt \,dx \\
& = \int_0^1 p(t) \int_{0}^{+\infty} G\left( \frac{\sqn}{f(t)}(t+x) \right) \,dx \,dt\\
& =  \frac{1}{\sqn} \int_0^1 p(t) f(t) \int_{\frac{\sqn t}{f(t)}}^{+\infty} G(u) \,du\, dt.
\end{align*}
Using \eqref{eq:G}, we obtain
\eq
\label{eq:bound4}
\begin{aligned}
&\int_{-\infty}^0 \bigg| \Pcro{\theta +\sqrt{\frac{\theta(1-\theta)}{n}} Z \leq x} - \Pcro{\theta \leq x}  \bigg | \; dx\\
&  \leq  \frac{1}{n \sqrt{2\pi e}} \int_0^1 p(t) \frac{f^2(t)}{t} dt = \frac{1}{n \sqrt{2\pi e}} \int_0^1 p(t)(1-t) dt.
\end{aligned}
\qe
From  \eqref{eq:bound3} and \eqref{eq:bound4} we get
\eq
\label{eq:bound2}
\begin{aligned}
&\int_{-\infty}^{1/2} \bigg|\Pcro{\theta +\sqrt{\frac{\theta(1-\theta)}{n}} Z \leq x} - \Pcro{\theta \leq x} \bigg| dx\\
& \leq \frac{1}{2n} \bigg[ \int_0^1 p(u)|1-2u| du + \int_0^1 u(1-u)|p'(u)|du  +  \frac{3\sqrt 2}{\sqrt{\pi e}} \int_0^1 p(u) (1-u) du\bigg].
\end{aligned}
\qe

\

\noindent {\bf Case 3 : $x \geq 1/2$}. In this case, we can use the symmetry of $f$ and $Z$ and the bound found for $x \leq 1/2$. More precisely, let $\theta_1 = 1- \theta$. Then, since $f(1-t) = f(t)$,
\begin{align*}
&\Pcro{\theta + \frac{f(\theta)}{\sqn} Z \leq x } - \Pcro{\theta\leq x}\\
& =\Pcro{1-\theta - \frac{f(1-\theta)}{\sqn} Z \geq 1-x } - \Pcro{1-\theta\geq 1-x}\\
& =\Pcro{\theta_1 - \frac{f(\theta_1)}{\sqn} Z \geq 1-x } - \Pcro{\theta_1\geq 1-x}\\
& =\Pcro{\theta_1 + \frac{f(\theta_1)}{\sqn} Z \geq 1-x } - \Pcro{\theta_1\geq 1-x}\\
& =\Pcro{\theta_1\leq 1-x}-\Pcro{\theta_1 + \frac{f(\theta_1)}{\sqn} Z \leq 1-x } .
\end{align*}
Thus,
\begin{align*}
&\int_{1/2}^{+\infty} \bigg|\Pcro{\theta + \frac{f(\theta)}{\sqn} Z \leq x } - \Pcro{\theta\leq x} \bigg| dx\\
& =\int_{1/2}^{+\infty} \bigg|\Pcro{\theta_1 + \frac{f(\theta_1)}{\sqn} Z \leq 1-x } - \Pcro{\theta_1\leq 1-x}\bigg| dx\\
& =\int_{-\infty}^{1/2} \bigg|\Pcro{\theta_1 + \frac{f(\theta_1)}{\sqn} Z \leq x } - \Pcro{\theta_1\leq x}\bigg| dx.
\end{align*}
Now we can use the bound in \eqref{eq:bound2} with the transformation $p(u) \rightarrow p(1-u)$, to obtain (after a change of variables $v=1-u$ in the integrals)~:
\begin{align*}
&\int_{1/2}^{+\infty} \bigg|\Pcro{\theta +\sqrt{\frac{\theta(1-\theta)}{n}} Z \leq x} - \Pcro{\theta \leq x} \bigg| dx\\
& \leq \frac{1}{2n} \left[ \int_0^1 p(u)|1-2u| du + \int_0^1 u(1-u)|p'(u)|du +  \frac{3\sqrt 2}{\sqrt{\pi e}} \int_0^1 p(u) u du\right].
\end{align*}
The proof follows from the last inequality and \eqref{eq:bound2}.
\end{proof}

\begin{rmk}
If  $p$ does not vanish on $(0,1)$ and $\rho_\theta(x) =\frac{p'(x)}{p(x)}$ is the score function of $\theta$, then the condition $\int_0^1 u(1-u) |p'(u)|du<\infty$ can be rewritten as
$$\Ecro{\theta(1-\theta)|\rho_\theta(\theta)|}<\infty. $$
\end{rmk}

The proof of our main theorem follows easily.

\begin{proof}[Proof of Theorem \ref{thm:main}] The first inequality in \eqref{eq:ineqmain} is just a restatement of Proposition \ref{prop:boundbelow}, whereas the upper bound follows 
from Propositions \ref{prop:equivalence} and \ref{prop:main}.
\end{proof}

\subsection{Application to the Beta distribution}

We specialize the result of Theorem \ref{thm:main} to the case of the Beta distribution. We explicit the bounds in \eqref{eq:ineqmain} when the density $p$ is given by \eqref{eq:densitebeta}. As a by-product, we obtain bounds of the optimal order with explicit constants for the distance of the scaled number of white balls drawn from a P\'olya-Eggenberger urn to its limiting distribution. As said before, such bounds were already obtained in \cite{goldstein2013stein}, with explicit constants as well. This will allow us to numerically compare the constants found in this article and the ones in \cite{goldstein2013stein}.

We begin with a Lemma which can be shown by elementary computations. Recall $B$ denotes the Beta function and we denote by $B_i$ the incomplete Beta function : for $x \in [0,1], \alpha>0,\beta>0$, $B_i(x,\alpha,\beta) = \int_0^x t^{\alpha-1}(1-t)^{\beta-1} dt$.
\begin{lma}
\label{lem:F}
Let $\theta$ have the Beta distribution with parameters $\alpha$ and $\beta$. For $a,b\in \R$ let 
$$F(\alpha,\beta,a,b) = \Ecro{|a \theta +b|}.$$
 Then, if $a>0$,
\begin{align*}
&F(\alpha,\beta,a,b)  \\
=&\left\{ \begin{array}{ll}
 \frac{1}{B(\alpha,\beta)}\big[ -2aB_i(-b/a,\alpha+1,\beta)\\
\quad-2bB_i(-b/a,\alpha,\beta)+  aB(\alpha+1,\beta)+bB(\alpha,\beta) \big] &\text{if} \; -a < b \leq 0,\\
 \frac{1}{B(\alpha,\beta)}\left[ aB(\alpha+1,\beta)+bB(\alpha,\beta)\right]&\text{if} \; b>0,\\
  -\frac{1}{B(\alpha,\beta)}\left[ aB(\alpha+1,\beta)+bB(\alpha,\beta)\right]  &\text{if} \; b\leq -a.
\end{array} \right.
\end{align*}
If $a=0$, $F(\alpha,\beta,0,b) = |b|$, and if $a<0$, $F(\alpha,\beta,a,b) = F(\alpha,\beta,-a,-b)$.
\end{lma}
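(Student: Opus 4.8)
The statement to prove is Lemma~\ref{lem:F}, which evaluates $F(\alpha,\beta,a,b) = \E[|a\theta + b|]$ for a Beta$(\alpha,\beta)$ random variable $\theta$.

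My plan is to reduce everything to the case $a>0$ (the cases $a=0$ and $a<0$ being trivial reductions, the latter via $|a\theta+b| = |(-a)\theta + (-b)|$), and then to split according to the sign behaviour of the linear function $u\mapsto au+b$ on the support $[0,1]$ of $\theta$. Since $a>0$, the root is at $u = -b/a$. There are three regimes: if $-b/a \geq 1$, i.e. $b \leq -a$, then $au+b \leq 0$ throughout $[0,1]$, so $|a\theta+b| = -(a\theta+b)$ and $F = -(a\E[\theta] + b)$; if $-b/a \leq 0$, i.e. $b \geq 0$, then $au+b \geq 0$ throughout, so $F = a\E[\theta]+b$; and if $0 < -b/a < 1$, i.e. $-a < b < 0$, the function changes sign inside the interval and one must split the integral at $u = -b/a$.

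The key computational step is to express $\E[\theta]$ and the truncated moments in terms of the Beta and incomplete Beta functions. First I would record that $\int_0^1 u \cdot p_{\alpha,\beta}(u)\,du = B(\alpha+1,\beta)/B(\alpha,\beta)$, which gives the two easy cases immediately. For the middle case, write
\begin{align*}
F(\alpha,\beta,a,b) &= \frac{1}{B(\alpha,\beta)}\left[ -\int_0^{-b/a}(au+b)u^{\alpha-1}(1-u)^{\beta-1}du + \int_{-b/a}^1 (au+b)u^{\alpha-1}(1-u)^{\beta-1}du\right],
\end{align*}
and recognize each piece via $\int_0^x u^{\alpha}(1-u)^{\beta-1}du = B_i(x,\alpha+1,\beta)$ and $\int_0^x u^{\alpha-1}(1-u)^{\beta-1}du = B_i(x,\alpha,\beta)$, together with $\int_{-b/a}^1 = \int_0^1 - \int_0^{-b/a}$. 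Collecting terms yields $a[B(\alpha+1,\beta) - 2B_i(-b/a,\alpha+1,\beta)] + b[B(\alpha,\beta) - 2B_i(-b/a,\alpha,\beta)]$, which is exactly the claimed expression after dividing by $B(\alpha,\beta)$.

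There is no real obstacle here — the lemma is, as the authors say, "elementary computations." The only thing to be careful about is bookkeeping: keeping the signs straight when splitting the integral at the root and when rewriting $\int_{-b/a}^1$ as a difference of integrals starting from $0$, and making sure the boundary cases $b=0$ and $b=-a$ of the middle formula agree with the other two branches (they do, since $B_i(0,\cdot,\cdot)=0$ and $B_i(1,\alpha,\beta)=B(\alpha,\beta)$, $B_i(1,\alpha+1,\beta)=B(\alpha+1,\beta)$). I would close by noting the degenerate cases: $a=0$ gives $|a\theta+b|=|b|$ deterministically, and $a<0$ follows from the substitution already mentioned.
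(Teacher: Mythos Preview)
Your proposal is correct and follows essentially the same approach as the paper: split the integral according to the position of the root $-b/a$ relative to $[0,1]$, handle the middle case by breaking at $-b/a$ and rewriting the pieces via $B_i$, and reduce $a\leq 0$ trivially. The paper's proof is slightly terser but otherwise identical in method.
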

\begin{proof}
If $-a< b \leq 0$, then $0\leq-\frac{b}{a}< 1$, so that
\begin{align*}
 \Ecro{|a \theta +b|} = &\frac{1}{B(\alpha,\beta)}\bigg[ \int_0^{-b/a} (-a t - b ) t^{\alpha-1}(1-t)^{\beta-1} dt +  \int_{-b/a}^{1} (a t + b ) t^{\alpha-1}(1-t)^{\beta-1} dt  \bigg].
\end{align*}
An expansion and straightforward calculations give the result in this case. The other cases are dealt with similarly.
\end{proof}

\begin{prop}
Assume that the law $\mu$ of $\theta$ is the Beta distribution with parameters $\alpha$ and $\beta$. Then
\eq
\was(\bar X_n,\theta ) \leq \frac{C_{\alpha,\beta}}{n},
\qe
where
\eq
\label{eq:calphabeta}
C_{\alpha,\beta}=\frac{B(\alpha+2,\beta)+B(\alpha,\beta+2)}{B(\alpha,\beta)}+F(\alpha,\beta,2,-1)+ F(\alpha,\beta,  \alpha+\beta-2, 1-\alpha )  + \frac{3}{\sqrt{2\pi e}},
\qe
and $F$ is defined in Lemma \ref{lem:F}.
\end{prop}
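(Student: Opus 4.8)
The plan is to specialize Theorem~\ref{thm:main} to the Beta density \eqref{eq:densitebeta} and to identify each of the three integrals making up the constant $C_2(\mu)$ with one of the closed-form quantities appearing in \eqref{eq:calphabeta}. First I would check that the hypotheses of Theorem~\ref{thm:main} are satisfied: the density $p = p_{\alpha,\beta}$ is $C^\infty$ on the open interval $(0,1)$, and $\mu(\{0,1\}) = 0$, so no preliminary reduction is needed; the integrability requirement \eqref{eq:3} will follow from the computation of $\int_0^1 u(1-u)|p'(u)|\,du$ carried out below, which expresses that integral as the expectation of the absolute value of an affine function of the bounded variable $\theta$, hence as a finite quantity. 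Granting this, Theorem~\ref{thm:main} gives $\was(\bar X_n,\theta) \le C_2(\mu)/n$, and it remains only to show $C_2(\mu) = C_{\alpha,\beta}$.

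Two of the three integrals are essentially immediate. The second-moment term is handled by the basic Beta integral, $\int_0^1 u^2 p(u)\,du = B(\alpha+2,\beta)/B(\alpha,\beta)$ and $\int_0^1 (1-u)^2 p(u)\,du = B(\alpha,\beta+2)/B(\alpha,\beta)$, which together produce $\big(B(\alpha+2,\beta)+B(\alpha,\beta+2)\big)/B(\alpha,\beta)$. The term $\int_0^1 |1-2u|\,p(u)\,du$ is, by the very definition of $F$ in Lemma~\ref{lem:F}, equal to $\Ecro{|2\theta-1|} = F(\alpha,\beta,2,-1)$.

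The only step requiring a genuine (though short) calculation is the derivative term. Differentiating $p(u) = u^{\alpha-1}(1-u)^{\beta-1}/B(\alpha,\beta)$ yields
\[
p'(u) = \frac{u^{\alpha-2}(1-u)^{\beta-2}}{B(\alpha,\beta)}\big[(\alpha-1) - (\alpha+\beta-2)u\big],
\]
so that $u(1-u)|p'(u)| = p(u)\,\big|(\alpha+\beta-2)u + (1-\alpha)\big|$ on $(0,1)$; integrating against Lebesgue measure gives
\[
\int_0^1 u(1-u)|p'(u)|\,du = \Ecro{\big|(\alpha+\beta-2)\theta + (1-\alpha)\big|} = F(\alpha,\beta,\alpha+\beta-2,1-\alpha),
\]
which is precisely the remaining term of \eqref{eq:calphabeta} (and, as noted, the finiteness of this quantity is exactly what validates \eqref{eq:3}). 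Adding the constant $3/\sqrt{2\pi e}$ common to $C_2(\mu)$ and $C_{\alpha,\beta}$ completes the identification $C_2(\mu) = C_{\alpha,\beta}$, whence the Proposition.

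I do not expect a serious obstacle. The only mildly delicate point is ensuring that the blow-up of $p$ at the endpoints when $\alpha<1$ or $\beta<1$ does not interfere with the application of Theorem~\ref{thm:main}; this is resolved by the observations that smoothness is only required on the open interval and that the relevant weighted integral of $|p'|$ is finite, as the explicit formula above shows. The rest is bookkeeping with Beta-function identities and Lemma~\ref{lem:F}.
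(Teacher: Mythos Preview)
Your proposal is correct and follows essentially the same route as the paper: apply Theorem~\ref{thm:main} and identify the three integrals in $C_2(\mu)$ with the Beta-function ratio and the two values of $F$, via the same computation of $u(1-u)p'(u)$. Your treatment is slightly more careful about checking the hypotheses (smoothness on $(0,1)$ and finiteness of \eqref{eq:3}), but otherwise the arguments coincide.
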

\begin{proof}
If $p$ is the density of the Beta distribution defined in \eqref{eq:densitebeta}, it is clear that $p$ satisfies the assumptions of Theorem \ref{thm:main}. Note that $\int_0^1 |1-2u| p(u) du = \Ecro{|1-2\theta|} = F(\alpha,\beta,2,-1)$. It is straightforward to show that $\int_0^1 (u^2+(1-u)^2) p(u) du =  \frac{B(\alpha+2,\beta)+B(\alpha,\beta+2)}{B(\alpha,\beta)}$.
Moreover,
\begin{align*}
  \int_0^{1} u(1-u) |p'(u)| du& =\int_0^1 u(1-u) \bigg| \frac{\alpha-1}{u}  - \frac{\beta-1}{1-u}\bigg| p(u) du\\
 &= \int_0^1  \bigg| (\alpha-1)(1-u)  - (\beta-1)u\bigg| p(u) du\\
 &= \int_0^1 |(\alpha+\beta-2) u +1-\alpha| p(u) du \\
 &= F(\alpha,\beta,  \alpha+\beta-2, 1-\alpha ),
\end{align*}
proving our claim.
\end{proof}

\begin{cor}
\label{cor:polya}
In a P\'olya-Eggenberg urn containing initially $A$ white balls and $B$ black balls, and where at each draw a ball is replaced along with $m$ balls of the same color, let $\bar X_n$ be the scaled number of white balls in $n$ draws. Let $\theta$ have the Beta distribution with parameters $A/m$ and $B/m$. Then
$$\was(\bar X_n, \theta) \leq \frac{C_{A/m,B/m}}{n},$$
where $C_{\alpha,\beta}$ is defined in \eqref{eq:calphabeta}.
\end{cor}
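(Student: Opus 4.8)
The plan is to obtain this statement as a direct specialization of the preceding Proposition, so that the only real content is the identification of the de Finetti measure of the urn sequence. First I would recall the classical description of the P\'olya-Eggenberger model: setting $X_n = 1$ if the ball drawn at time $n$ is white and $X_n = 0$ otherwise produces an infinite sequence of $0$-$1$ random variables, and a direct computation of $\P[X_1 = e_1, \dots, X_n = e_n]$ --- tracking, at each draw, the color-dependent number of favourable balls in the numerator and the total number of balls in the denominator --- gives a ratio of rising factorials that depends on $(e_1, \dots, e_n)$ only through $k = \sum_i e_i$ (whence exchangeability). Rewriting those products as ratios of Gamma functions and recognizing a Beta integral, one sees that \eqref{eq:definetti} holds with $\mu$ equal to the Beta distribution of parameters $\alpha = A/m$ and $\beta = B/m$; these are strictly positive since $A, B, m \geq 1$. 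Alternatively, this identification may simply be quoted from the standard references on P\'olya urns, e.g. \cite{pitman}.

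Once this is in place, the de Finetti-type LLN \eqref{eq:2} shows that $\bar X_n$ converges weakly to $\mu = \mathrm{Beta}(A/m, B/m)$, which is precisely the setting of the preceding Proposition. Its density $p_{A/m, B/m}$ is smooth on $(0,1)$ and satisfies the integrability condition \eqref{eq:3}, as was already verified in the proof of that proposition. Applying the preceding Proposition with $\alpha = A/m$ and $\beta = B/m$ then yields $\was(\bar X_n, \theta) \leq C_{A/m, B/m}/n$, with $C_{\alpha,\beta}$ as in \eqref{eq:calphabeta}, which is exactly the claim.

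I do not anticipate any genuine obstacle here: the result is a specialization of an already-proven bound, and the only step requiring (elementary) care is the Beta-integral computation identifying the mixing measure of the urn --- a computation that can in any case be bypassed by citing it from the literature, after which the corollary is immediate from Theorem \ref{thm:main} and the preceding Proposition.
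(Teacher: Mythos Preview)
Your proposal is correct and matches the paper's approach: the paper states this corollary without proof, treating it as an immediate consequence of the preceding Proposition together with the classical fact (already recalled in the Introduction) that the P\'olya--Eggenberger urn sequence is exchangeable with de Finetti measure $\mathrm{Beta}(A/m,B/m)$. Your write-up simply makes this explicit, which is fine.
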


Now, let us compare this result with the one of Goldstein and Reinert. We plot the ratio of the constant $C_{A/m,B/m}$ to the one obtained in \cite{goldstein2013stein}, Theorem 1.1, for values of $A/m$ and $B/m$ ranging from $10^{-5}$ to $3$.  \begin{figure}[!h] \center\includegraphics[width=10cm]{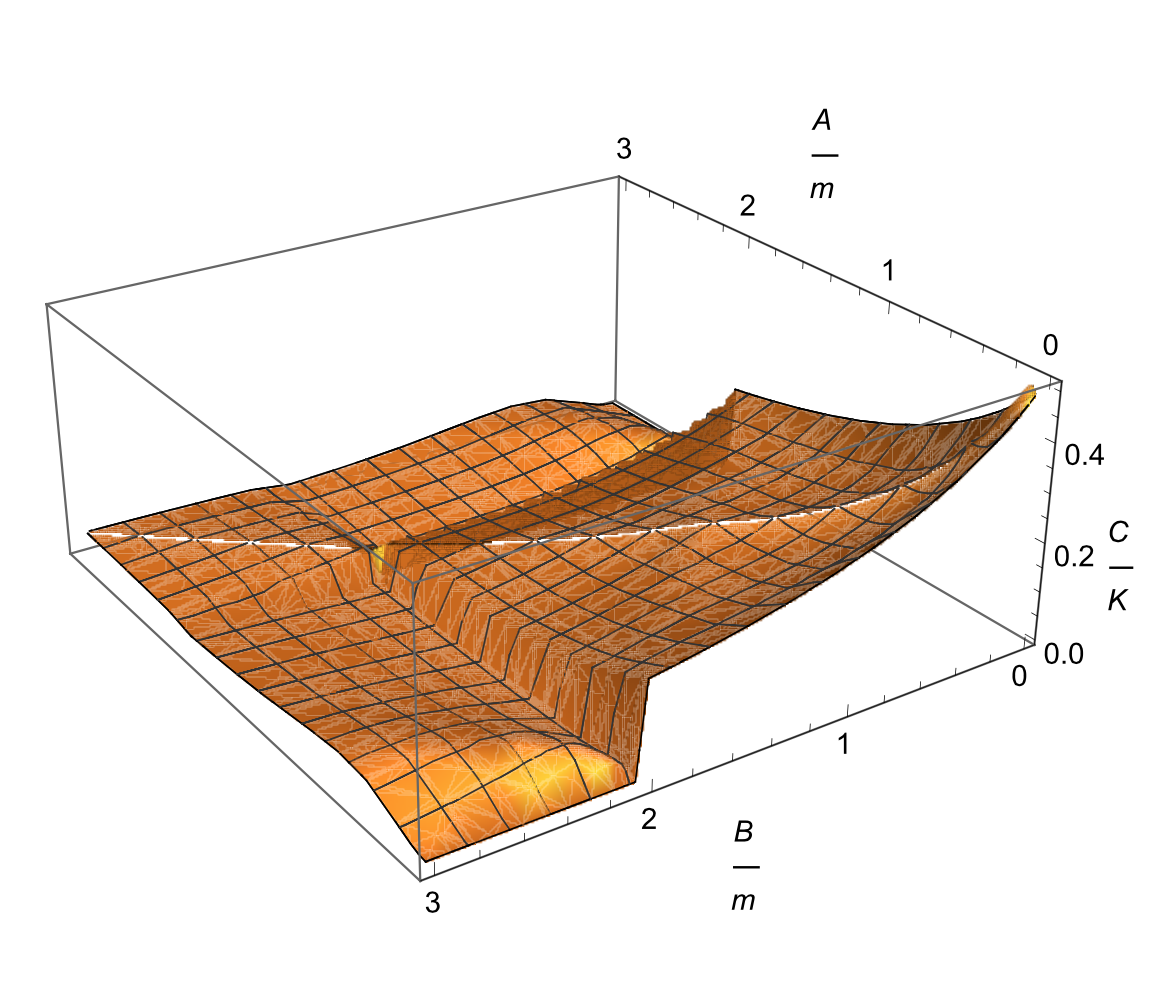} \caption{$C_{A/m,B/m}/K_{A/m,B/m}$ where $K_{A/m,B/m}$ is defined in \cite{goldstein2013stein}, Theorem 1.1. $(A/m,B/m) \in [10^{-5},3]^2$.}  \end{figure}

As we can see, our constant $C_{A/m,B/m}$ is at least half that of \cite{goldstein2013stein} for the set of parameters we chose; the ratio seems to go to zero as $A/m$ or $B/m$ become large.

\section{The rate $1/n^\delta$ is possible for any $1/2 \leq \delta \leq 1$}
\label{sec:examples}

We saw in Proposition \ref{prop:main} that a sufficient condition to get a rate of convergence of the order $1/n$ for $\was \left(\theta + \sqrt{\frac{\theta(1-\theta)}{n}} Z\; , \; \theta\right)$ is that $\mu$ is absolutely continuous with density $p$ on $(0,1)$ satisfying \linebreak $\int_0^1 u(1-u) |p'(u)| du<\infty$. The goal of this section is to show that this is not true anymore with the weaker asumption that $\mu$ is simply absolutely continuous. Actually, for each $\delta \in (1/2,1)$, we give an example of a measure $\mu$ with a density on $(0,1)$ such that $\was\left(\theta + \sqrt{\frac{\theta(1-\theta)}{n}} Z\; , \; \theta\right)$ is of the order $1/n^\delta$. This is the content of the two following propositions.

\begin{prop}
\label{prop:grando}
Let $\gamma \in (0,1)$ and suppose that $\theta$ has the density $p(x) = C_p (x-\frac{1}{2})^{\gamma-1} \mathbf 1_{(1/2,3/4)}(x)$ ($C_p$ is a normalizing constant). Let $(X_k)_{k\geq 1}$ be a Bernoulli sequence with prior $\theta$. Then
$$\was(\bar X_n,\theta) = \mO\left(  \frac{1}{n^{\frac{1+\gamma}{2}}} \right).$$
\end{prop}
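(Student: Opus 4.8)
The plan is first to pass, via Proposition~\ref{prop:equivalence}, from $\was(\bar X_n,\theta)$ to the perturbed quantity $\was(\theta+\sqrt{\theta(1-\theta)/n}\,Z,\theta)$, at the cost of an additive error $\E[\theta^2+(1-\theta)^2]/n=\mO(1/n)$; since $\gamma<1$ we have $(1+\gamma)/2<1$, so this error is dominated by the target rate and it suffices to prove $\was(\theta+\sqrt{\theta(1-\theta)/n}\,Z,\theta)=\mO(n^{-(1+\gamma)/2})$. I would record the features of $\mu$ to be used: its support is $[1/2,3/4]$, on which $f(t):=\sqrt{t(1-t)}$ stays in a compact subinterval $[c_1,c_2]$ of $(0,\infty)$ and is Lipschitz with $f'(1/2)=0$ (whence $|f(x+r)-f(x-r)|\le C(x-\tfrac12)r$ for $x\in[1/2,3/4]$, $0<r<x-\tfrac12$); $p$ is $C^\infty$ on $(1/2,3/4)$ with $p(t)\asymp(t-\tfrac12)^{\gamma-1}$, $|p'(t)|\asymp(t-\tfrac12)^{\gamma-2}$ as $t\downarrow1/2$ and a mere jump at $3/4$; and the distribution function $F$ of $\theta$ obeys $F(\tfrac12+r)=\tfrac{C_p}{\gamma}r^\gamma$ for small $r>0$, so $F$ is globally $\gamma$-H\"older. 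In particular $\int_0^1u(1-u)|p'(u)|\,du=\infty$, so Proposition~\ref{prop:main} is not available.

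Next I would use the representation \eqref{eq:was4}, $\was(\theta+\tfrac{f(\theta)}{\sqn}Z,\theta)=\int_\R|F_n(x)-F(x)|\,dx$ with $F_n(x)=\int_0^1F_Z(\tfrac{\sqn(x-t)}{f(t)})p(t)\,dt$, and split $\R$ into (i)~$x\le1/2$, (ii)~$1/2\le x\le1/2+n^{-1/2}$, (iii)~$1/2+n^{-1/2}\le x\le3/4$, (iv)~$x\ge3/4$. On (i) and (iv), $F$ is constant, so $F_n-F$ is a single integral and Fubini applies; using $\int_y^{+\infty}(1-F_Z)(u)\,du\le\min\{(2\pi)^{-1/2},(y\sqrt{2\pi e})^{-1}\}$ (as in \eqref{eq:G}) together with $\int_0^{+\infty}u^{\gamma-1}(1-F_Z)(u)\,du<\infty$, the singularity $(t-\tfrac12)^{\gamma-1}$ combined with the $\sqn$-scaling produces precisely the exponent $n^{-(1+\gamma)/2}$ (near $3/4$, where $p$ is bounded, one even gets $\mO(n^{-1}\log n)$). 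On (ii) I would bound $|F_n(x)-F(x)|$ by the sum of the two one-sided integrals of the splitting in \eqref{eq:1}, \emph{without} using cancellation: each of them is $\mO(n^{-\gamma/2})$ uniformly on this window, which has width $n^{-1/2}$, hence contributes $\mO(n^{-(1+\gamma)/2})$.

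The essential point is region (iii), where the naive bound $|F_n(x)-F(x)|\le\int_0^1(1-F_Z)(\tfrac{\sqn|x-t|}{f(t)})p(t)\,dt$ only yields $\mO(1/\sqn)$, so one must retain the cancellation exactly as in the proof of Proposition~\ref{prop:main}. Writing $s=x-\tfrac12$ and pairing the contributions of $t=x\pm r$ in \eqref{eq:1}, I would split $F_n(x)-F(x)$ into: (a) a density-gradient term, essentially $\int_0^{s\wedge(3/4-x)}(1-F_Z)(\tfrac{\sqn r}{f(x+r)})\big[(s-r)^{\gamma-1}-(s+r)^{\gamma-1}\big]\,dr$; (b) an $f$-gradient term coming from $f(x+r)\ne f(x-r)$; (c) a tail term supported near the endpoints. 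For (a) one uses $0\le(s-r)^{\gamma-1}-(s+r)^{\gamma-1}\le C\min\{s^{\gamma-2}r,\,(s-r)^{\gamma-1}\}$, splits at $r=s/2$, and applies Fubini with $\int_0^{+\infty}u(1-F_Z)(u)\,du=\tfrac14$ and the Gaussian tail; the resulting $x$-integral is dominated by $s\to0$ and equals $\mO\big(n^{-1}\int_{n^{-1/2}}^{1/2}s^{\gamma-2}\,ds\big)=\mO(n^{-(1+\gamma)/2})$. Term (b) is lower order because $|f(x+r)-f(x-r)|\le Csr$ gives $|(1-F_Z)(\tfrac{\sqn r}{f(x+r)})-(1-F_Z)(\tfrac{\sqn r}{f(x-r)})|\le C\sqn\,s\,r^2 e^{-nr^2/(2c_2^2)}$, hence a contribution $\mO(1/n)$. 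Term (c), after Fubini, is controlled by $\int_0^{+\infty}u^{\gamma-1}(1-F_Z)(u)\,du<\infty$ and is $\mO(n^{-(1+\gamma)/2})$, with the kink of $F$ at $3/4$ again contributing only $\mO(n^{-1}\log n)$. Summing (i)--(iv) gives $\was(\theta+\sqrt{\theta(1-\theta)/n}\,Z,\theta)=\mO(n^{-(1+\gamma)/2})$, which proves the proposition.

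I expect the main obstacle to be precisely the bookkeeping of region (iii): the cancellation that in Proposition~\ref{prop:main} was supplied by the integrability of $u(1-u)|p'(u)|$ is no longer available, and it has to be recovered through the scale cutoff $|x-\tfrac12|>n^{-1/2}$, which must be matched correctly against the two competing scales $(t-\tfrac12)^{\gamma-1}$ (singularity of $p$) and $1/\sqn$ (width of the smoothing). Verifying that the $\theta$-dependence of the variance $f(\theta)^2/n$ contributes only lower-order terms, and dealing with the endpoint $3/4$, are routine but require a little care.
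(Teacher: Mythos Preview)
Your proposal is correct and follows the same overall strategy as the paper: reduce via Proposition~\ref{prop:equivalence}, use the $L^1$ representation~\eqref{eq:was4}, and recover the rate by exploiting the cancellation inherent in the pairing $t=x\pm r$ (equivalently, in $\partial_1 H$), exactly as in the proof of Proposition~\ref{prop:main}.

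There are, however, two organisational differences worth noting. First, the paper begins with the affine change of variables $\tilde\theta=4(\theta-\tfrac12)$, so that one works with the density $\tilde C_p\,x^{\gamma-1}\mathbf 1_{(0,1)}(x)$ and the bounded, Lipschitz function $\tilde f(x)=f(\tfrac12+\tfrac x4)$; this removes the need to track the location of the singularity and makes the endpoint at $3/4$ a routine ``bounded-density'' case. Second, and more interestingly, the paper does \emph{not} introduce your boundary layer $|x-\tfrac12|\le n^{-1/2}$. After writing $H(x+t/\sqn,t)-H(x-t/\sqn,t)=\int_{x-t/\sqn}^{x+t/\sqn}\partial_1 H(u,t)\,du$ and bounding $|\partial_1 H(u,t)|\le M_1\,t\,\omega(t)\,u^{\gamma-1}+M_2\,G(t)\,u^{\gamma-2}$, the paper applies Fubini to the full triple integral over $(x,t,u)$ and rescales all three variables by $\sqn$ simultaneously; the factor $n^{-(1+\gamma)/2}$ then falls out directly from the homogeneity $u^{\gamma-2}$, and the remaining integral $\int_0^\infty G(t)\,t^{\gamma}\,dt$ is finite. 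So your belief that the cancellation ``has to be recovered through the scale cutoff'' is not quite right: the cutoff is a legitimate device, but the paper's simultaneous rescaling handles the whole range $0\le x\le 1/2$ (in the $\tilde\theta$ variable) at once, which is cleaner. Your separate observation that the $f$-gradient term (b) is of lower order because $f'(1/2)=0$ is a nice refinement the paper does not isolate.
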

\begin{proof}
From Proposition \ref{prop:equivalence}, it is sufficient to show that
$$\was \left(\theta + \sqrt{\frac{\theta(1-\theta)}{n}} Z\; , \; \theta\right)  = \mO \left( \frac{1}{n^{\frac{1+\gamma}{2}}}  \right),$$
where $Z$ stands for a normal random variable independent of $\theta$. The proof is similar to the one of Proposition \ref{prop:main}; we use here the same notations and do not give all the details in the calculations. In the following, $M_1$ and $M_2$ are generic positive constants that may vary from line to line.
We have
$$
\was\left(\theta + \frac{f(\theta)}{\sqrt n} Z\; , \; \theta\right) = \int_{\R} \bigg| \Pcro{\theta +\frac{f(\theta)}{\sqrt n} Z \leq x} - \Pcro{\theta \leq x} \bigg| dx.
$$
Define $\tth = 4(\theta-\frac{1}{2})$, so that $\tth$ has density equal to $\tilde C_p x^{\gamma-1} \mathbf 1_{(0,1)}(x)$ ($\tilde C_p$ normalizing constant). For $x \in (0,1)$, let $\tf(x) = f\left(1/2+\frac{x}{4}\right)$. We have
\begin{align*}
&\int_{\R} \bigg| \Pcro{\theta +\frac{f(\theta)}{\sqrt n} Z \leq x} - \Pcro{\theta \leq x} \bigg| dx\\
& =\int_{\R} \bigg| \Pcro{\tth +\frac{\tf(\tth)}{\sqrt n} Z \leq 4(x-1/2)} - \Pcro{\tth \leq 4(x-1/2)} \bigg| dx\\
& =\frac{1}{4}\int_{\R} \bigg| \Pcro{\tth +\frac{\tf(\tth)}{\sqrt n} Z \leq x} - \Pcro{\tth \leq x} \bigg| dx
\end{align*}
Thus it suffices to show that the last quantity is a $\mO\left(  \frac{1}{n^{\frac{1+\gamma}{2}}} \right)$.

\

\noindent  {\bf Case 1 : $0\leq x \leq 1/2$}. In this case
\begin{align}
&\frac{1}{\tilde C_p} \left( \Pcro{\tth +\frac{\tf(\tth)}{\sqrt n} Z \leq x} - \Pcro{\tth \leq x} \right) \notag \\
& =   \int_0^{x} \left( G\left( \frac{\sqn t}{\tf(x+t)} \right) (x+t)^{\gamma-1}-G\left( \frac{\sqn t}{\tf(x-t)} \right) (x-t)^{\gamma-1}  \right)dt \notag \\
&+ \int_{x}^{1-x} G\left( \frac{\sqn t}{\tf(x+t)}  \right) (x+t)^{\gamma-1} dt. \label{eq:moche}
\end{align}
If $\tilde H(u,t) =  G\left( \frac{t}{\tf(u)}\right) u^{\gamma-1}$, then
$$\partial_1 \tilde H(u,t) = t \frac{\tf'(u)}{\tf^2(u)} \omega\left(\frac{t}{\tf(u)}\right) u^{\gamma-1} - (1-\gamma)G\left(\frac{t}{\tf(u)}\right)u^{\gamma-2}.$$
 It is clear from the definition of $f$ and $\tf$ that $0<m_1\leq \tf(u) \leq 1$ and $| \tf'(u)| \leq m_2$ for some constants $m_1$ and $m_2$. Thus
 $$|\partial_1 \tilde H(u,t)| \leq M_1 t \,\omega(t) u^{\gamma-1} + M_2 G(t) u ^{\gamma-2}.$$
 We get
 \begin{align*}
&\int_0^{1/2} \int_0^{x} \bigg| G\left( \frac{\sqn t}{\tf(x+t)} \right) (x+t)^{\gamma-1}-G\left( \frac{\sqn t}{\tf(x-t)} \right) (x-t)^{\gamma-1}  \bigg| dt \, dx\\
& \leq \int_0^{1/2} \int_0^{x}   \int_{x-t}^{x+t} \left( M_1 \sqn\, t \,\omega(\sqn t) u^{\gamma-1} + M_2 G(\sqn t) u ^{\gamma-2} \right) du\, dt\, dx
\end{align*}
However,
\begin{align*}
&\int_0^{1/2} \int_0^{x}   \int_{x-t}^{x+t}  \sqn \,t \,\omega(\sqn t) u^{\gamma-1} du\, dt\, dx\\
& =\frac{1}{n^{1+\gamma/2}} \int_0^{\sqn/2} \int_0^{x}   \int_{x-t}^{x+t}   t \,\omega( t) u^{\gamma-1} du\, dt\, dx\\
& =\frac{1}{n^{1+\gamma/2}} \int_0^{\sqn/2} t\, \omega(t) \int_t^{\sqn/2}   \int_{x-t}^{x+t}      u^{\gamma-1} du\, dx\, dt\\
 & \leq \frac{1}{n^{1+\gamma/2}} \int_0^{\sqn/2} t \,\omega(t)   \int_0^{t+\sqn/2}  u^{\gamma-1}\int_{u-t}^{u+t} dx     dx\, du\, dt\\
& \leq \frac{2}{n^{1+\gamma/2}} \int_0^{\sqn/2} t^2 \omega(t)   \int_0^{\sqn}  u^{\gamma-1}   \, du\, dt\\
& \leq \frac{M_1}{n} \int_0^{+\infty} t^2 \omega(t) dt.
\end{align*}
On the other hand, 
\begin{align*}
&\int_0^{1/2} \int_0^{x}   \int_{x-t}^{x+t} G(\sqn t) u ^{\gamma-2} du\, dt\, dx\\
& = \frac{1}{n^{\frac{1+\gamma}{2}}} \int_0^{\sqn/2} \int_0^{x}  G( t) \int_{x-t}^{x+t}  u ^{\gamma-2} du\, dt\, dx\\
& \leq \frac{1}{n^{\frac{1+\gamma}{2}}} \int_0^{+\infty} \int_0^{x}  G( t) \int_{x-t}^{x+t}  u ^{\gamma-2} du\, dt\, dx\\
&=  \frac{1}{n^{\frac{1+\gamma}{2}}} \int_0^{+\infty}  G( t) \int_t^{+\infty}  \int_{-t}^{t}  (u+x) ^{\gamma-2} du\, dx\, dt\\
&= \frac{1}{n^{\frac{1+\gamma}{2}}} \int_0^{+\infty}  G( t)  \int_{-t}^{t}  \int_t^{+\infty} (u+x) ^{\gamma-2} dx\, du\, dt\\
&\leq \frac{M_1}{n^{\frac{1+\gamma}{2}}} \int_0^{+\infty}  G( t)  \int_{-t}^{t} (t+u) ^{\gamma-1}  du\, dt\\
& \leq \frac{M_1}{n^{\frac{1+\gamma}{2}}} \int_0^{+\infty}  G( t)  t^{\gamma}\, dt.
\end{align*}
It remains to show that $\int_0^{1/2} \int_{x}^{1-x} G\left( \frac{\sqn t}{\tf(x+t)}  \right) (x+t)^{\gamma-1} dt = \mO\left( \frac{1}{n^{\frac{1+\gamma}{2}}} \right)$.
\begin{align*}
&\int_0^{1/2} \int_{x}^{1-x} G\left( \frac{\sqn t}{\tf(x+t)}  \right) (x+t)^{\gamma-1} dt \,dx\\
& \leq\int_0^{1/2} \int_{2x}^{1} G\left( \sqn( t-x) \right) t^{\gamma-1} dt \,dx\\
&=\int_0^{1} t^{\gamma-1} \int_{0}^{t/2} G\left( \sqn( t-x) \right)  dx\, dt\\
&=\int_0^{1} t^{\gamma-1} \int_{t/2}^{t} G\left( \sqn x\right)  dx\, dt\\
& =\frac{1}{n^{\frac{1+\gamma}{2}}} \int_0^{\sqn}  t^{\gamma-1}  \int_{t/2}^{t} G\left( x \right)  dx\, dt\\
& \leq \frac{1}{n^{\frac{1+\gamma}{2}}} \int_0^{+\infty}  t^{\gamma-1}  \int_{t/2}^{t} G\left( x \right)  dx \, dt,
\end{align*}
so the case $0\leq x \leq 1/2$ is complete.

\

\noindent {\bf Case 2 : $x\leq 0$}. In this case, similarly as Case 2 of the proof of Proposition \ref{prop:main}, we show that
\begin{align*}
&\int_{-\infty}^0 \bigg| \Pcro{\tth +\sqrt{\frac{\tth(1-\tth)}{n}} Z \leq x} - \Pcro{\tth \leq x}  \bigg | \; dx\\
&= \int_0^1 t^{\gamma-1} \int_{0}^{+\infty} G\left( \frac{\sqn}{\tf(t)}(t+x) \right) \,dx \,dt\\
&\leq \int_0^1 t^{\gamma-1} \int_{0}^{+\infty} G\left( \sqn (t+x) \right) \,dx \,dt\\
& = \frac{1}{n^{\frac{1+\gamma}{2}}} \int_0^{\sqn} t^{\gamma-1} \int_{t}^{+\infty} G( x ) \,dx \,dt\\
& \leq \frac{1}{n^{\frac{1+\gamma}{2}}} \int_0^{+\infty} t^{\gamma-1} \int_{t}^{+\infty} G( x ) \,dx \,dt.
\end{align*}

The bound in the cases $1/2\leq x \leq 1$ and $x\geq 1$ are proved in a similar manner.
\end{proof}

Now let us prove that $\was(\bar X_n,\theta) $ is also bounded below by a term of the order $\frac{1}{n^{\frac{1+\gamma}{2}}}$.
\begin{prop}
\label{prop:borneinf}
Under the hypothesis of Proposition \ref{prop:grando}, there exists $C>0$ such that
$$\was(\bar X_n,\theta)  \geq \frac{C}{n^{\frac{1+\gamma}{2}}}.$$
\end{prop}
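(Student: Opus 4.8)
The plan is to mimic the lower-bound argument of Proposition \ref{prop:boundbelow}, but to extract the slower rate from the non-integrable blow-up of the density at the left endpoint $1/2$ of the support. First I would appeal to Proposition \ref{prop:equivalence}: since $\theta$ is bounded, $\Ecro{\theta^2+(1-\theta)^2}/n=\mO(1/n)$, and because $\gamma<1$ this error term is negligible compared with $n^{-(1+\gamma)/2}$; hence it is enough to prove
$$\was\Big(\theta+\sqrt{\tfrac{\theta(1-\theta)}{n}}\,Z\,,\,\theta\Big)\ \geq\ \frac{C'}{n^{\frac{1+\gamma}{2}}}$$
for some $C'>0$ and all $n$ large enough. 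The finitely many remaining values of $n$ are then absorbed into the constant, using that $\was(\bar X_n,\theta)>0$ for every $n\geq 1$ (the law of $\bar X_n$ is discrete, that of $\theta$ is not).

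Next I would use the representation \eqref{eq:was4} and keep only the contribution of $x<1/2$. Because $\theta$ is supported in $(1/2,3/4)$, we have $\Pcro{\theta\leq x}=0$ for $x<1/2$, so there is no cancellation; writing $f(x)=\sqrt{x(1-x)}$ and $F_Z$ for the c.d.f.\ of $Z$, Tonelli's theorem gives
$$\was\Big(\theta+\tfrac{f(\theta)}{\sqn}Z\,,\,\theta\Big)\ \geq\ \int_{-\infty}^{1/2}\Pcro{\theta+\tfrac{f(\theta)}{\sqn}Z\leq x}\,dx\ =\ \Ecro{\tfrac{f(\theta)}{\sqn}\,g\Big(\tfrac{\sqn}{f(\theta)}\big(\tfrac{1}{2}-\theta\big)\Big)},$$
where, after the substitution $y=\sqn(x-t)/f(t)$, the function $g(a):=\int_{-\infty}^{a}F_Z(y)\,dy=\Ecro{(a-Z)^+}$ appears; this $g$ is nonnegative, nondecreasing, and strictly positive on $\R$.

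Then I would bound the last expectation from below by restricting it to the event $\{1/2<\theta<1/2+\delta/\sqn\}$ for a small fixed $\delta$ (e.g.\ $\delta=\sqrt3/4$, which is admissible once $n$ is large enough that $1/2+\delta/\sqn<3/4$). On this event $f(\theta)\geq f(3/4)=\sqrt3/4=\delta$, since $f$ is decreasing on $(1/2,1)$, so the argument $\sqn(1/2-\theta)/f(\theta)$ of $g$ lies in $(-1,0)$ and the value of $g$ there is at least $g(-1)>0$; meanwhile, by the explicit density $p(x)=C_p(x-\tfrac{1}{2})^{\gamma-1}\mathbf 1_{(1/2,3/4)}(x)$, the event has probability $C_p\int_0^{\delta/\sqn}s^{\gamma-1}\,ds=\frac{C_p\delta^{\gamma}}{\gamma}\,n^{-\gamma/2}$. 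Collecting the prefactor $1/\sqn$, the lower bound on $f$, and these two estimates yields the desired lower bound $C'n^{-(1+\gamma)/2}$.

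I do not anticipate a genuine difficulty here: once the region $x<1/2$ and the window $\theta\in(1/2,1/2+\delta/\sqn)$ are isolated, everything reduces to elementary one-variable estimates. The only points deserving care are (i) choosing $\delta$ small enough that, via the uniform lower bound $f\geq\sqrt3/4$ on $(1/2,3/4)$, the argument of $g$ stays above $-1$ for all $\theta$ in the window, so that the monotonicity of $g$ applies; and (ii) checking at the very start that the $\mO(1/n)$ discrepancy in Proposition \ref{prop:equivalence} is of strictly smaller order than $n^{-(1+\gamma)/2}$ — which is precisely where the hypothesis $\gamma\in(0,1)$ enters.
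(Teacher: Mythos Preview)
Your argument is correct. The reduction via Proposition \ref{prop:equivalence}, the use of \eqref{eq:was4} restricted to $x<1/2$ (where $\Pcro{\theta\le x}=0$ so there is no cancellation), the Tonelli computation yielding $\Ecro{\tfrac{f(\theta)}{\sqn}\,g(\cdots)}$ with $g(a)=\Ecro{(a-Z)^+}$, and the window estimate on $\{1/2<\theta<1/2+\delta/\sqn\}$ all go through exactly as you describe; the choice $\delta=\sqrt3/4$ together with the uniform bound $f(\theta)\ge\sqrt3/4$ on $(1/2,3/4)$ indeed forces the argument of $g$ into $(-1,0)$, and $g(-1)>0$.

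The paper proceeds differently: it uses the dual representation \eqref{eq:was3} with the explicit test function $\psi(x)=|x-\tfrac12|$, computes $\Ecro{\psi(\theta+\tfrac{f(\theta)}{\sqn}Z)}-\Ecro{\psi(\theta)}$ via the closed form $\Ecro{|a+Z|}=a(1-2F_Z(-a))+2\omega(a)$, rescales to extract the factor $n^{-(1+\gamma)/2}$, and then appeals to dominated convergence and an integration by parts to show the remaining integral has a strictly positive limit. Your route is more elementary and more robust: it avoids the explicit formula for $\Ecro{|a+Z|}$, the dominated-convergence step, and the verification that the limiting integral is nonzero, and it would transfer with little change to any density with a comparable power singularity at an interior point. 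The paper's computation, on the other hand, effectively identifies the sharp asymptotic constant (the limit $\tfrac{1}{2(\gamma+1)}\int_0^\infty t^{\gamma-1}\omega(2t)\,dt$), which your soft window argument does not.
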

\begin{proof}
From Proposition \ref{prop:equivalence}, it suffices to prove that the same type of bound holds for $\was\left(\theta + \frac{f(\theta)}{\sqrt n} Z\; , \; \theta\right)$, $Z$ being a standard normal variable independent of $\theta$. We use the dual version of the Wasserstein distance. Let $\psi$ be the $1$-Lipschitz function defined by
$$\psi(x) =\bigg| x- \frac{1}{2} \bigg|.$$
As before, let $f(x) = \sqrt{x(1-x)}$. Then
\begin{align*}
&\Ecro{\psi\left(\theta+ \frac{f(\theta)}{\sqn} Z\right)} \\
& =C_p \int_{1/2}^{3/4} \Ecro{ \bigg| t + \frac{f(t)}{\sqn} Z -\frac{1}{2}\bigg|} \left( t -\frac{1}{2}\right)^{\gamma-1}dt\\
&=C_p \int_{0}^{1/4} t  ^{\gamma-1}\Ecro{\bigg | t + \frac{f(t+1/2)}{\sqn} Z \bigg|} dt\\
& =C_p \int_{0}^{1/4} t  ^{\gamma-1}\frac{f(t+1/2)}{\sqn}\Ecro{\bigg | \frac{\sqn t}{f(t+1/2)} +  Z \bigg|} dt.
\end{align*}
However, a straightforward computation shows that for every $a \in \R$,
\begin{align*}
\Ecro{|a +Z|}= a( 1- 2 F_Z(-a) ) +2 \omega(a).
\end{align*}
Thus
\begin{align*}
&\Ecro{\psi\left(\theta+ \frac{f(\theta)}{\sqn} Z\right)} \\
 = &C_p  \int_{0}^{1/4} t  ^{\gamma-1}\left[ t \left( 1- 2 F_Z\left(-\frac{\sqn t}{f(t+1/2)}\right) \right) + \frac{2f(t+1/2)}{\sqn} \omega\left(\frac{\sqn t}{f(t+1/2)}\right) \right] dt.
 \end{align*}
 On the other hand, we have
 $$\Ecro{\psi(\theta)} = C_p  \int_{0}^{1/4} t  ^{\gamma}   dt.$$
 We obtain
 \begin{align*}
&\Ecro{\psi(\theta+ \frac{f(\theta)}{\sqn} Z)}-\Ecro{\psi(\theta)} \\
& =2C_p  \int_0^{1/4} t  ^{\gamma-1} \left[ - t  F_Z\left(-\frac{\sqn t}{f(t+1/2)}\right)  + \frac{ f(t+1/2)}{\sqn} \omega\left(\frac{\sqn t}{f(t+1/2)}\right) \right] dt\\
& =\frac{2C_p}{n^{\frac{\gamma}{2}}} \int_0^{\frac{1}{4}\sqn}  t  ^{\gamma-1} \left[ - \frac{t}{\sqn}  F_Z\left(-\frac{ t}{f\left(\frac{t}{\sqn}+1/2\right)}\right)  + \frac{ f\left(\frac{t}{\sqn}+1/2\right)}{\sqn} \omega\left(\frac{ t}{f\left(\frac{t}{\sqn}+1/2\right)}\right) \right] dt\\
& =\frac{2C_p}{n^{\frac{1+\gamma}{2}}} \int_0^{\frac{1}{4}\sqn}  t  ^{\gamma-1} \left[ - t F_Z\left(-\frac{ t}{f\left(\frac{t}{\sqn}+1/2\right)}\right)  +  f\left(\frac{t}{\sqn}+1/2\right) \omega\left(\frac{ t}{f\left(\frac{t}{\sqn}+1/2\right)}\right) \right] dt.
\end{align*}
Now, since $0\leq f(x) \leq 1$ for every $x \in [0,1]$, we have that
\begin{align*}
&\Bigg| t  ^{\gamma-1} \left[ - t F_Z\left(-\frac{ t}{f\left(\frac{t}{\sqn}+1/2\right)}\right)  +  f\left(\frac{t}{\sqn}+1/2\right) \omega\left(\frac{ t}{f\left(\frac{t}{\sqn}+1/2\right)}\right) \right] \Bigg|\\
 \leq\;&  t  ^{\gamma-1} \left[ t F_Z\left(-t\right)  +   \omega\left(t\right) \right],
\end{align*}
so that the above integral tends to $\int_0^{+\infty} t ^{\gamma-1} \left[ - t F_Z\left(-2t\right) + \frac{1}{2} \omega\left(2t\right) \right] dt$ by dominated convergence. It remains to show that this limit is not zero to achieve the proof. Integrating by parts twice, we have
$$\int_0^{+\infty}  t  ^{\gamma}   F_Z\left(-2t\right) dt = \frac{\gamma}{2(\gamma+1)} \int_0^{+\infty}t^{\gamma-1} \omega(2t) dt,$$
so that
\begin{align*}
& \int_0^{+\infty}  t  ^{\gamma-1} \left[ - t F_Z\left(-2t\right)  +  \frac{1}{2} \omega\left(2t\right) \right] dt  = \frac{1}{2(\gamma+1)} \int_0^{+\infty}t^{\gamma-1} \omega(2t) dt,
\end{align*}
which is positive.
\end{proof}

\begin{prop}
\label{prop:examples}
For every $\delta \in [1/2,1]$, there exists an infinite sequence of exchangeable 0-1 random variables $(X_k)_{k \geq 1}$ such that, if $\theta$ has the limiting distribution of $\bar X_n$,
\eq
\label{eq:bounddelta}
\was(\bar X_n, \theta) \cong \frac{1}{n^\delta}.
\qe
Conversly, if an infinite sequence of exchangeable 0-1 random variables $(X_k)_{k \geq 1}$ verifies \eqref{eq:bounddelta} for some random variable $\theta$ and some $\delta>0$, then $\delta \in [1/2,1]$.
\end{prop}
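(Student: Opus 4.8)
The statement has two halves: producing, for every $\delta\in[1/2,1]$, an exchangeable $0$-$1$ sequence whose convergence rate is exactly $n^{-\delta}$, and showing that no exponent outside $[1/2,1]$ can occur. For the first half my plan is to distinguish three regimes. When $\delta\in(1/2,1)$ I would simply invoke the work already done: set $\gamma=2\delta-1\in(0,1)$ and let $\theta$ have the density $p(x)=C_p\,(x-\tfrac12)^{\gamma-1}\mathbf 1_{(1/2,3/4)}(x)$ of Propositions~\ref{prop:grando} and~\ref{prop:borneinf}; since $\tfrac{1+\gamma}{2}=\delta$, those propositions give respectively $\was(\bar X_n,\theta)=\mO(n^{-\delta})$ and $\was(\bar X_n,\theta)\ge C\,n^{-\delta}$, which is precisely \eqref{eq:bounddelta}. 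When $\delta=1$ I would take $\theta$ uniform on $(0,1)$ (or any Beta law): its density is smooth and trivially satisfies \eqref{eq:3}, so Theorem~\ref{thm:main} gives $C_1(\mu)\,n^{-1}\le\was(\bar X_n,\theta)\le C_2(\mu)\,n^{-1}$ with $C_1(\mu)=\E[\theta(1-\theta)]>0$, i.e. \eqref{eq:bounddelta} with $\delta=1$.

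The endpoint $\delta=1/2$ is the only genuinely new case. Here the plan is to let $\theta$ be deterministic, $\theta\equiv c$ for a fixed $c\in(0,1)$, so that $\mu=\delta_c$ and in particular $\mu(\{0,1\})=0$. By Corollary~\ref{cor:equiv} the problem then reduces to estimating $\was\!\big(c+\sqrt{c(1-c)/n}\,Z,c\big)$, and since $c$ is deterministic this is exactly $\sqrt{c(1-c)/n}\;\E|Z|=\sqrt{2c(1-c)/(\pi n)}$, which is $\cong n^{-1/2}$. (Should one prefer to avoid Corollary~\ref{cor:equiv}: Proposition~\ref{prop:boundbelow} bounds $\was(\bar X_n,c)$ above by $\sqrt{c(1-c)/n}$, while testing against the $1$-Lipschitz function $x\mapsto|x-c|$ and applying a Paley--Zygmund anti-concentration estimate to $(\bar X_n-c)^2$ --- whose first two moments are of order $n^{-1}$ and $n^{-2}$ --- produces the matching lower bound.)

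For the converse, suppose $\was(\bar X_n,\theta)\cong n^{-\delta}$ for some $\delta>0$. I would first treat the degenerate case: if $\mu(\{0,1\})=1$ then $\bar X_n=\theta$ almost surely and $\was(\bar X_n,\theta)\equiv 0$, which contradicts $n^{-\delta}=\mO(\was(\bar X_n,\theta))$; hence $\mu(\{0,1\})<1$, and since $t(1-t)>0$ on $(0,1)$ this forces $\E[\theta(1-\theta)]>0$. Proposition~\ref{prop:boundbelow} now applies and gives $\E[\theta(1-\theta)]\,n^{-1}\le\was(\bar X_n,\theta)\le\sqrt{\E[\theta(1-\theta)]}\;n^{-1/2}$. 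Comparing the left bound with $\was(\bar X_n,\theta)=\mO(n^{-\delta})$ forces $n^{\delta-1}=\mO(1)$, hence $\delta\le 1$; comparing the right bound with $n^{-\delta}=\mO(\was(\bar X_n,\theta))$ forces $n^{1/2-\delta}=\mO(1)$, hence $\delta\ge 1/2$. Therefore $\delta\in[1/2,1]$.

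I do not anticipate a real obstacle: modulo Theorem~\ref{thm:main}, Propositions~\ref{prop:boundbelow}, \ref{prop:grando}, \ref{prop:borneinf} and Corollary~\ref{cor:equiv}, the proof is bookkeeping. The two points needing a little care are that Propositions~\ref{prop:grando}--\ref{prop:borneinf} cover only the open interval $(1/2,1)$, so the endpoints must be supplied separately (a point mass in $(0,1)$ for $\delta=1/2$, the uniform law for $\delta=1$), and that in the converse one must eliminate the case $\mu(\{0,1\})=1$ before invoking the two-sided estimate of Proposition~\ref{prop:boundbelow}.
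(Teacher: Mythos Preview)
Your proposal is correct and follows essentially the same route as the paper: the open interval $(1/2,1)$ via Propositions~\ref{prop:grando}--\ref{prop:borneinf} with $\gamma=2\delta-1$, the endpoint $\delta=1$ via Theorem~\ref{thm:main} applied to a Beta prior, the endpoint $\delta=1/2$ via a Dirac prior in $(0,1)$ combined with Proposition~\ref{prop:equivalence}/Corollary~\ref{cor:equiv}, and the converse via Proposition~\ref{prop:boundbelow}. The only nuance the paper makes explicit (and you leave implicit) is that in the converse $\theta$ is \emph{a priori} an arbitrary random variable, but $\was(\bar X_n,\theta)\to 0$ forces its law to coincide with the de~Finetti measure, after which Proposition~\ref{prop:boundbelow} applies directly.
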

\begin{proof}
If $\delta \in (1/2,1)$ the existence of the sequence is insured by Propositions \ref{prop:grando} and \ref{prop:borneinf} (just take $\gamma = 2\delta-1$).

If $\delta = 1$, from Theorem \ref{thm:main} it suffices to choose $\theta$ with a Beta distribution.

If $\delta = 1/2$, taking $\theta$ with distribution a Dirac mass, say, at $1/2$,  it is easy to see from the very definition of the Wasserstein distance that
$$ \was \left(\theta + \sqrt{\frac{\theta(1-\theta)}{n}} Z\; , \; \theta\right) =  \frac{\E|Z|}{2\sqn},$$
and from Proposition \ref{prop:equivalence},  this implies $\was(\bar X_n,\theta) \cong 1/\sqn$.

The converse is a direct consequence of Proposition \ref{prop:boundbelow}, since if \eqref{eq:bounddelta} holds then the distribution of $\theta$ is the limiting distribution of $\bar X_n$.

\end{proof}

\section*{Acknowledgements} GP would like to thank Pietro Rigo and Antonio Lijoi for useful discussions.  GM's research is supported by a Welcome Grant from the Université de Liège. GP acknowledges support of the project F1R-MTH-PUL-15STAR / STARS at Luxembourg University.  YS  acknowledges support from the IAP Research Network P7/06 of the Belgian State (Belgian Science Policy).  

\bibliographystyle{plain} \bibliography{definetti}

\

\noindent (G. Mijoule and Y. Swan) \textsc{Département de Mathématique, Faculté des Sciences, Université de Liège, Belgium}
 
\

\noindent (G. Peccati) \textsc{Unité de Recherche en Mathématiques, Faculté des Sciences, de la Technologie et de la Communication, Université du Luxembourg, Luxembourg} 

\

\noindent\emph{E-mail address}, G. Mijoule {\tt guillaume.mijoule@gmail.com }

\noindent\emph{E-mail address}, G. Peccati {\tt giovanni.peccati@gmail.com }

\noindent\emph{E-mail address}, Y. Swan  {\tt yswan@ulg.ac.be }

\end{document}